\numberwithin{equation}{section}
\newtheorem{theorem}{Theorem}[section]
\newtheorem{lemma}[theorem]{Lemma}
\newtheorem{prop}[theorem]{Proposition}
\newtheorem{example}{Example}
\theoremstyle{definition}
\newtheorem{remark}[theorem]{Remark}
\theoremstyle{remark}
\newcommand{\ds}{\displaystyle}
\newcommand{\R}{\mathbb{R}}
\newcommand{\de}{\partial}
\newcommand{\eps}{\varepsilon}
\newcommand{\Om}{\Omega}
\newcommand{\n}{\nabla}
\def\XXint#1#2#3{{\setbox0=\hbox{$#1{#2#3}{\int}$}
   \vcenter{\hbox{$#2#3$}}\kern-.5\wd0}}
\renewcommand\footnotemark{}
\numberwithin{equation}{section}
\title{Weighted Robin eigenvalue problems and nonlinear elliptic equations with general growth in the gradient}
\author{Francesco Della Pietra
\thanks{
F. Della Pietra: Dipartimento di Matematica e Applicazioni ``R. Caccioppoli'', Universit\`a degli studi di Napoli Federico II, Via Cintia, Complesso Universitario Monte S. Angelo, 80126 Napoli, Italy.
\href{mailto:f.dellapietra@unina.it}{\nolinkurl{f.dellapietra@unina.it}}
}
\and
 Giuseppina di Blasio
\thanks{G. di Blasio: Dipartimento di Matematica e Fisica, Università degli Studi della Campania “Luigi Vanvitelli”, viale Lincoln 5,81100 Caserta, Italy. Email: giuseppina.diblasio@unicampania.it}
\and
Giuseppe Riey\thanks{G. Riey:
Dipartimento di Matematica e Informatica, Università della Calabria, Ponte Pietro Bucci 31B, Cosenza, Rende 87036, Italy. Email: giuseppe.riey@unical.it}
}
\date{\today}
\begin{document}
\maketitle

\begin{abstract}
 \textbf{Abstract.}
We prove an existence result for Robin boundary value problems modeled on
 \begin{equation*}
\begin{cases}
    \Delta u + |\nabla u|^2 + \lambda f(x) = 0 & \text{in } \Omega \\
    \frac{\partial u}{\partial \nu} + \beta u = 0 & \text{on } \partial\Omega
\end{cases}
\end{equation*}
where $\Omega$ is a bounded, sufficiently smooth open set in $\R^N$, $f(x)$ belongs to the Marcinkiewicz space $M^{\nicefrac N2}$ and {$\beta>0$}, under a smallness assumption on the datum $\lambda$. In order to study such problem, we will show several properties of the weighted, singular Robin eigenvalue problem
\[
\lambda_{1,f,\gamma}(\Omega)= \inf_{\psi\in H^{1},\;\int_{\Omega}f\psi^{2}=1}\left\{\int_{\Omega}|\nabla \psi|^{2}dx+\gamma\int_{\de\Omega}\psi^{2}\right\}.
\]
 \noindent \textbf{MSC 2020:} 35J66, 35J20, 35P15. \\[.2cm]
\textbf{Key words and phrases:}  Elliptic equations with natural growth in the gradient; Robin boundary conditions; a priori estimates; existence results.
\end{abstract}

\begin{center}
\begin{minipage}{12cm}
\small
\tableofcontents
\end{minipage}
\end{center}

\section{Introduction}

In this paper we are concerned with the existence of solutions to a class of nonlinear elliptic boundary value problems with Robin boundary conditions. To introduce the subject, let us consider the prototype problem
\begin{equation}
\label{problemmainintro}
\begin{cases}
    -\Delta u = \sigma_{0}|\nabla u|^2 + \lambda f(x) & \text{in } \Omega \\
    \frac{\partial u}{\partial \nu} + \beta u = 0 & \text{on } \partial\Omega
\end{cases}
\end{equation}
where $\Omega$ is a bounded Lipschitz domain in $\R^N$ ($N\ge 3$), $\nu$ is the outer unit normal to $\partial\Omega$, $\lambda,\sigma_{0},\beta > 0$ are constants, and $f(x)$ is a nonnegative function belonging to the Marcinkiewicz space $M^{\nicefrac{N}{2}}(\Omega)$.

Problems with quadratic growth in the gradient, often referred to as having natural (or critical) growth, have been extensively studied, particularly in the case of Dirichlet boundary conditions, as well as its generalization to $p-$Laplace operator, or with subcritical growth. The bibliography is vast; we recall, for example, \cite{abddaper,afm,bmp92, dp,dpdb,dpg4,dpper,fm98,fm00,femu13,gmp12,hmv99, kazkra89,oliva,tr03}. The critical nature of the space $L^{\nicefrac{N}{2}}$ (or $M^{\nicefrac{N}{2}}$) for the source term $f$ is well-known in this context. Moreover, it is also well known that, in the Dirichlet case ($\beta=+\infty$), in order to get a $H^{1}$ solution to \eqref{problemmainintro} a smallness hypothesis on $\lambda$ is needed.
In particular, when $f\not\equiv 0$, this happens if $\lambda$ is smaller than the first weighted eigenvalue $\lambda_{1,f}(\Omega)$ of $-\Delta$, namely
\begin{equation}
\label{lambda1f}
\lambda_{1,f}(\Omega)=\inf_{\psi \in H^1_0(\Omega)\setminus\{0\}} \frac{\ds\int_\Omega |\nabla \psi|^2dx}{\ds\int_{\Omega}f(x)\psi^2dx}.
\end{equation}
% When $f$ is only in $L^1$ or measure-valued, the problem requires the framework of renormalized or entropy solutions \cite{BeBoMu}.
This variational problem includes as special cases the standard first Dirichlet eigenvalue (when $f \equiv 1$) and the Hardy constant (when $f(x) = |x|^{-2}$, $\lambda_{1,f}=\frac{(N-2)^{2}}{4}$).

The main novelty of this work lies in the treatment of the Robin boundary condition, combined with a source term in the Lorentz space $M^{\nicefrac{N}{2}}$, as for example the Hardy weight. While Dirichlet problems are naturally associated with the functional space $H^1_0(\Omega)$, the Robin condition requires working in the full space $H^1(\Omega)$, which introduces specific difficulties related to the trace terms. % and the lack of Poincaré inequality in its standard form.
As a matter of fact, if we formally perform the classical change of variable $v=e^{u}$, problem \eqref{problemmainintro} becomes
\[
\begin{cases}
\Delta v + \lambda f v = 0 & \text{in } \Omega \\
\frac{\partial v}{\partial \nu} + \beta v \ln v = 0 & \text{on } \partial\Omega
\end{cases}
\]
The transformation linearizes the PDE but introduces a nonlinearity of logarithmic type into the Robin boundary condition.

In the Robin case, the existence of a solution to \eqref{problemmainintro} is deeply connected to the properties of a related weighted eigenvalue problem. Specifically, we investigate the first eigenvalue of the Laplacian with Robin boundary conditions and a singular weight $f$. We consider
\begin{equation}
\label{lambda1fgamma}
\lambda_{1,f,\gamma}(\Omega)=\inf_{\psi\in H^{1}(\Omega)\setminus\{0\}} R_{\gamma,f}[\psi]
\end{equation}
where
\[
R_{\gamma,f}[\psi] =  \frac{\ds\int_\Omega |\nabla \psi|^2dx+\gamma \int_{\de \Omega}\psi^{2}d\sigma}{\ds\int_{\Omega}f(x)\psi^2dx}.
\]
and $\gamma \in \mathbb{R}$. This variational problem generalizes the classical weighted Dirichlet eigenvalue \eqref{lambda1f} (obtained for $\gamma \to +\infty$) and includes as special cases the standard Robin eigenvalue (when $f \equiv 1$) and variants of the Hardy constant (when $f(x) = |x|^{-2}$). The case of the Hardy potential has been  studied by Chabrowski et al. in \cite{cpr, chab} and by Adimurthi et al. in \cite{adi, adiest}, who established Hardy-Sobolev type inequalities. Our hypotheses will cover a wide family of weights $f$. We will only impose a condition on $f$ to avoid singularities close to the boundary (see Section 3).

Then, the main existence result for problems like \eqref{problemmainintro} (Theorem \ref{thm:main_existence}) states that if the parameter $\lambda$ is sufficiently small, specifically
\[
\lambda < \frac{\lambda_{1,f}(\Omega)}{\sigma_0},
\]
then there exists at least one weak solution $u \in H^1(\Omega)$ to the problem. Furthermore, this solution possesses a regularity property typical of problems with quadratic growth: $e^{\alpha |u|} \in H^1(\Omega)$ for some $\alpha > 0$. Moreover, it is interesting to observe that the smallness hypothesis for the existence does not depend on $\beta$, but only on the Dirichlet eigenvalue ($\beta=+\infty$).

To achieve this, we first perform a detailed analysis of the eigenvalue problem \eqref{lambda1fgamma}. We prove the existence of minimizers using a Concentration-Compactness argument to handle the possible lack of compactness due to the singular weight $f \in M^{\nicefrac{N}{2}}$, adapting the techniques of P.L. Lions \cite{lions} and Smets \cite{smets}. We also study the qualitative properties of $\lambda_{1,f,\gamma}(\Omega)$ as a function of $\gamma$, showing its continuity, monotonicity, and asymptotic behavior towards the Dirichlet eigenvalue $\lambda_{1,f}(\Omega)$.
The bibliography on this topic is quite large, too; we refer, for example, to \cite{smets, sw}, for the study of some properties of weighted eigenvalue properties in the Dirichlet case.

The proof of the existence theorem relies on a priori estimates obtained by using exponential test functions (a technique going back to \cite{bmp92}), adapted to handle the boundary terms and the arbitrary sign of the solution. The convergence of the approximate solutions is then established using truncation methods and pointwise convergence of gradients, as developed in \cite{BeBoMu, fm00, femu13}.

In summary, the aim paper of the paper is twofold: the study of a Robin eigenvalue problem with a very general weight, and the existence of solutions of equations modeled on \eqref{problemmainintro}. It is organized as follows: in Section 2 we introduce the functional setting and some preliminary results, including a concentration-compactness lemma. Section 3 is devoted to the study of the weighted Robin eigenvalue problem \eqref{lambda1fgamma}. Finally, in Section 4 we state and prove the existence result for the general nonlinear problem \eqref{generalrobinpb}, and we discuss the necessity of the condition $\beta > 0$ for the validity of our estimates.

\section{Preliminaries}

\subsection{Functional setting}
We recall that for $p > 0 $, the {Marcinkiewicz space} $M^p(\Omega)$, also known as the weak $L^p$ space, is the set of all measurable functions $f$ on $\Omega$ such that there exists a constant $C>0$ for which the distribution function of $f$, $\mu_{f}(t):=|\{x\colon |f(x)|>t\}|$, satisfies
\[
\mu_{f}(t)\le \frac{C}{t^{p}},\quad \forall t> 0.
\]
It is well-known that, when $|\Omega|<+\infty$, then $L^{q}(\Omega) \subset M^{q}(\Omega)\subset L^{q-\eps}(\Omega)$, for any $\eps>0$.

The  space $M^{q}$ is a particular case of the so called Lorentz spaces. A measurable function $f:\Omega\rightarrow \R$ belongs to the Lorentz space $L^{p,q}(\Omega)$, $1 < p< +\infty$, if
\begin{equation*}%\label{eq:deflorentz}
  \|f\|_{p,q,\Omega}=\left\{
    \begin{array}{ll}
      \displaystyle\left\{\int_0^{|\Omega|} \left[t^{1/p} f^{*}(t)\right]^q
        \frac{dt}{t}\right\}^{1/q},&1\le q<+\infty,\\
      \displaystyle\sup_{0<t< |\Omega|} t^{1/p} f^{*}(t),&q=+\infty,
    \end{array}
  \right.
\end{equation*}
is finite, where $f^{*}(s)$, $s\in[0,|\Omega|]$, is the decreasing rearrangement of $f$, that is the distribution function of $\mu_{f}$.
In general $\|f\|_{{p,q,\Omega}}$ is not a norm. As matter of fact, it is
possible to introduce a metric in $L^{p,q}$ defining
\[
\|f\|_{(p,q)}=\|f^{**}\|_{p,q,\Omega},
\]
with $f^{**}(t)=t^{-1}\int_0^t f^*(\sigma)\,d\sigma$.
%We observe that also $u^{**}$ is a decreasing function, hence
%$(u^{**})^*=u^{**}$.
%By means of the properties of rearrangements, the inequality %\eqref{Hardy1d1} and the properties of rearrangements,
we have that for $1<p< +\infty$ and $1\le q \le +\infty$,
\[
\|f\|_{p,q,\Omega}\le\|f\|_{(p,q)}\le \frac{p}{p-1}\|f\|_{p,q,\Omega}.
\]
%Hence, the topology induced by $\|\cdot\|_{(p,q)}$ and
%$\|\cdot\|_{p,q}$ is the same, that is
%\begin{equation*}%\label{top-lor}
%u_{n}\rightarrow u \text { in }L(p,q) \iff \lim_n
%\|u_n-u\|_{{p,q}}=0.
%\end{equation*}

For $1\le p,q \le +\infty$, it holds that
\begin{equation}
\label{holdlor}
\|fg\|_{L^{1}(\Omega)} \le \|f\|_{p,q,\Omega}\|g\|_{p',q',\Omega},
\end{equation}
where $p',q'$ are the H\"older conjugates of $p,q$ respectively.
More generally, the $L^{p,q}$ spaces are related in the
following way:
\[
    L^r\subset L^{p,1}\subset L^{p,q}\subset L^{p,p}=L^p\subset
    L^{p,r}\subset L^{p,\infty}=M^{p}\subset L^q,
\]
for $1 < q < p < r < +\infty$.  The Lorentz spaces allow to get a refinement of the Sobolev inequality. Indeed, it holds that
\begin{equation*}
%\label{sobolor}
\|f\|_{p^{*},p,\R^{N}} \le S_{N,p}\|f\|_{W^{1,p}(\R^{N})}
\end{equation*}
when for $1\le p<N$ (see \cite{alv}). Then, if $\Omega$ is a bounded Lipschitz domain, using classical extension theorems we can conclude that there exists a positive constant $C$ such that
\begin{equation}
\label{sobolor}
\|f\|_{p^{*},p,\Omega} \le C\|f\|_{W^{1,p}(\Omega)}
\end{equation}
for any $f\in W^{1,p}(\Omega)$.

In the following, the Proposition below will be useful.
\begin{prop}
\label{propequiv}
Let $\Omega \subset \mathbb{R}^N$ be a bounded, connected, Lipschitz domain, and let $f \in L^1(\Omega)$ a positive function in $\Omega$.
Then there exists a positive constant $C$ such that for every $u \in H^1(\Omega)$,
\begin{equation*}
\label{propequiveq}
\|u\|_{H^1(\Omega)} \le C \left( \|\nabla u\|_{L^2(\Omega)} + \|u\|_{L^2(\Omega,f)} \right),
\end{equation*}
where $\|u\|_{L^2(\Omega,f)}=\left( \int_\Omega f u^2 dx \right)^{1/2}$.
\end{prop}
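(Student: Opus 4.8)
The plan is to argue by contradiction, exploiting the fact that, since $\Omega$ is bounded and Lipschitz, the embedding $H^1(\Omega)\hookrightarrow L^2(\Omega)$ is compact (Rellich--Kondrachov). Suppose the asserted inequality fails for every constant. Then for each $n\in\N$ there is $u_n\in H^1(\Omega)$ with $\|u_n\|_{H^1(\Omega)}=1$ and
\[
\|\nabla u_n\|_{L^2(\Omega)}+\|u_n\|_{L^2(\Omega,f)}<\tfrac1n .
\]
In particular $(u_n)$ is bounded in $H^1(\Omega)$, so up to a subsequence (not relabelled) we have $u_n\rightharpoonup u$ weakly in $H^1(\Omega)$, $u_n\to u$ strongly in $L^2(\Omega)$, and $u_n\to u$ a.e.\ in $\Omega$.

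\textbf{Identifying the limit.} Since $\nabla u_n\to 0$ strongly in $L^2(\Omega)$ and $u_n\to u$ strongly in $L^2(\Omega)$, the sequence $(u_n)$ is Cauchy in $H^1(\Omega)$; hence $u_n\to u$ in $H^1(\Omega)$, so $\|u\|_{H^1(\Omega)}=1$ and $\nabla u=0$ a.e.\ in $\Omega$. Because $\Omega$ is connected, $u$ is a.e.\ equal to a constant $c$, and from $\nabla u=0$ together with $\|u\|_{H^1(\Omega)}^2=\|u\|_{L^2(\Omega)}^2+\|\nabla u\|_{L^2(\Omega)}^2=1$ we get $c^2|\Omega|=1$, so $c\neq 0$.

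\textbf{Using the weight.} From $u_n\to c$ a.e.\ in $\Omega$ and $f\geq 0$, Fatou's lemma yields
\[
c^{2}\int_\Omega f\,dx=\int_\Omega \liminf_n\, f u_n^2\,dx\le \liminf_n\int_\Omega f u_n^2\,dx=\liminf_n\|u_n\|_{L^2(\Omega,f)}^2=0 .
\]
Since $f$ is positive in $\Omega$ and $f\in L^1(\Omega)$, we have $0<\int_\Omega f\,dx<+\infty$, forcing $c=0$, which contradicts $c\neq 0$. This proves the Proposition, with a constant $C$ depending only on $\Omega$ and $f$.

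\textbf{Main difficulty.} The only delicate point is that $f$ is assumed merely in $L^1(\Omega)$: one cannot pass to the limit in $\int_\Omega f u_n^2$ by a H\"older/continuity argument (that would require $f$ in a space of the $L^{\nicefrac N2}$-type through the Sobolev embedding of $u_n$), so the argument must use a.e.\ convergence together with Fatou's lemma. The hypotheses that $\Omega$ be connected and that $f>0$ a.e.\ are precisely what is needed to rule out the nonzero constant in the limit.
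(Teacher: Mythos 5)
Your proof is correct and follows essentially the same route as the paper's: a contradiction argument with a normalized sequence, weak $H^1$-compactness, connectedness to identify the limit as a nonzero constant, and Fatou's lemma against $f>0$ to derive the contradiction. The only cosmetic difference is that you pass through strong $H^1$-convergence (noting the sequence is $H^1$-Cauchy), while the paper simply reads off $\|v_0\|_{L^2}=1$ from strong $L^2$-convergence; the substance is identical.
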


\begin{proof}
We prove the inequality by contradiction. Therefore, for any $n \ge 1$, there exists $u_n \in H^1(\Omega)$ such that
\[
\|u_n\|_{H^1(\Omega)} > n \left( \|\nabla u_n\|_{L^2(\Omega)} + \left(\int_\Omega f u_n^2 dx\right)^{1/2} \right).
\]
Let us define $v_n = \frac{u_n}{\|u_n\|_{H^1(\Omega)}}$. By construction, the sequence $\{v_n\}$ satisfies
\begin{enumerate}[(i)]
    \item $\|v_n\|_{H^1(\Omega)} = 1$ for all $n \in \mathbb{N}$,
    \item $\|\nabla v_n\|_{L^2(\Omega)} + \left(\int_\Omega f v_n^2 dx\right)^{1/2} < \frac{1}{n}$.
\end{enumerate}
From  (ii), it follows immediately that as $n \to \infty$:
\begin{equation} \label{eq:convergences_en}
    \|\nabla v_n\|_{L^2(\Omega)} \to 0 \quad \text{and} \quad \int_\Omega f(x) v_n^2 dx \to 0.
\end{equation}
From  (i), the sequence $\{v_n\}$ is bounded in $H^1(\Omega)$. Therefore, up to a subsequence, $v_n \rightharpoonup v_0$ weakly in $H^1(\Omega)$, and $v_n \to v_0$ in $L^2(\Omega)$ and almost everywhere in $\Omega$. From \eqref{eq:convergences_en}, we must have $\nabla v_0 = 0$ and,
since $\Omega$ is a connected domain, this implies that $v_0 \equiv K$. Furthermore, combining $\|v_n\|_{H^1}^2 = \|\nabla v_n\|_{L^2}^2 + \|v_n\|_{L^2}^2 = 1$ and that $\|\nabla v_n\|_{L^2} \to 0$, it holds $\|v_n\|_{L^2} \to 1$. As $v_n \to v_0$ strongly in $L^2(\Omega)$, it follows that $\|v_0\|_{L^2} = 1,$ and $ v_{0}=K = \pm 1/\sqrt{|\Omega|}$.
Finally, by Fatou's Lemma and \eqref{eq:convergences_en}, it holds
\[
    \int_\Omega \liminf_{k\to\infty} (f(x) v_{n}(x)^2) \, dx \le \liminf_{k\to\infty} \int_\Omega f(x) v_{n}^2 \, dx=0.
\]
This yields:
\[
    \int_\Omega f(x) v_0^2 \, dx = 0.
\]
this is a contradiction, being $f(x)>0$ a.e. and $v_0^2 = K^2 > 0$. The proof is complete.
\end{proof}

\begin{prop}
Let $\Omega \subset \mathbb{R}^N$ be a bounded, Lipschitz domain, and let $f \in L^1(\Omega)$ a positive function in $\Omega$ such that $f > 0$ almost everywhere in $\Omega$. Then, for every $\eps > 0$, there exists a constant $C = C(\varepsilon) > 0$ such that for all $u \in H^1(\Omega)$, the following inequality holds:
\begin{equation}
\label{traceweighted}
\int_{\partial\Omega} u^2 \, d\sigma \le \eps \int_{\Omega} |\nabla u|^2 \, dx + C(\eps)\int_{\Omega} u^2 f \, dx
\end{equation}
\end{prop}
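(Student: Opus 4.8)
The plan is to argue by contradiction, in the same spirit as the proof of Proposition~\ref{propequiv}, combining the compactness of the trace embedding $H^{1}(\Omega)\hookrightarrow L^{2}(\partial\Omega)$ with the a priori control on the full $H^{1}$-norm furnished by Proposition~\ref{propequiv}. If \eqref{traceweighted} were false, there would exist $\eps_{0}>0$ and a sequence $\{u_{n}\}\subset H^{1}(\Omega)$ with
\[
\int_{\partial\Omega}u_{n}^{2}\,d\sigma > \eps_{0}\int_{\Omega}|\nabla u_{n}|^{2}\,dx + n\int_{\Omega}f u_{n}^{2}\,dx .
\]
Since the right-hand side is nonnegative we have $\int_{\partial\Omega}u_{n}^{2}\,d\sigma>0$, so we may normalize so that $\int_{\partial\Omega}u_{n}^{2}\,d\sigma=1$; the displayed inequality then gives $\int_{\Omega}|\nabla u_{n}|^{2}\,dx<1/\eps_{0}$ and $\int_{\Omega}f u_{n}^{2}\,dx<1/n\to 0$.

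The first and most delicate step is to deduce that $\{u_{n}\}$ is bounded in $H^{1}(\Omega)$. The inequality above only controls the gradient and the weighted seminorm $\|u_{n}\|_{L^{2}(\Omega,f)}$, and without further information one could not exclude $\|u_{n}\|_{L^{2}(\Omega)}\to\infty$; this is exactly what Proposition~\ref{propequiv} prevents, since it yields $\|u_{n}\|_{H^{1}(\Omega)}\le C\big(\|\nabla u_{n}\|_{L^{2}(\Omega)}+\|u_{n}\|_{L^{2}(\Omega,f)}\big)$ and both terms on the right are bounded. Once boundedness in $H^{1}(\Omega)$ is in hand, up to a subsequence $u_{n}\rightharpoonup u_{0}$ weakly in $H^{1}(\Omega)$, $u_{n}\to u_{0}$ strongly in $L^{2}(\Omega)$ and a.e. in $\Omega$, and, by the compactness of the trace operator on a bounded Lipschitz domain, $u_{n}\to u_{0}$ strongly in $L^{2}(\partial\Omega)$; passing to the limit in the normalization gives $\int_{\partial\Omega}u_{0}^{2}\,d\sigma=1$.

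To close the argument, I would use Fatou's lemma together with $\int_{\Omega}f u_{n}^{2}\,dx\to 0$ and the a.e. convergence to conclude $\int_{\Omega}f u_{0}^{2}\,dx\le\liminf_{n}\int_{\Omega}f u_{n}^{2}\,dx=0$. Since $f>0$ almost everywhere in $\Omega$, this forces $u_{0}=0$ a.e. in $\Omega$, hence $u_{0}=0$ as an element of $H^{1}(\Omega)$, so its trace on $\partial\Omega$ vanishes — contradicting $\int_{\partial\Omega}u_{0}^{2}\,d\sigma=1$. As noted, the crux of the proof is the boundedness of $\{u_{n}\}$ in $H^{1}(\Omega)$, which is what reduces the statement to the (standard) compactness of the trace embedding and a routine weak-compactness plus Fatou argument.
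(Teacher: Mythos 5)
Your proof is correct and follows exactly the same route as the paper's: contradiction, normalization of the boundary integral to $1$, invoking Proposition~\ref{propequiv} to obtain the $H^{1}$-bound, then compactness of the trace together with Fatou's lemma to force $u_{0}=0$ against the persisting boundary normalization. Your remark that the boundedness in $H^{1}(\Omega)$ via Proposition~\ref{propequiv} is the crux (since the gradient and weighted $L^{2}$ bounds alone do not control $\|u_{n}\|_{L^{2}(\Omega)}$) is the right emphasis and matches the intent of the paper's argument.
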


\begin{proof}
We proceed by contradiction. Then there exists an $\epsilon_0 > 0$ such that for all $m \in \mathbb{N}$, we can find a function $u_m \in H^1(\Omega)$ (with $u_m \not\equiv 0$) satisfying
\[
\varepsilon_0 \int_{\Omega} |\nabla u_m|^2 \, dx + m \int_{\Omega} u_m^2 f \, dx < \int_{\partial\Omega} u_m^2 \, d\sigma =1,\quad \text{for all } m
\]
Hence, by \eqref{propequiv}, $u_m$ is bounded in the $H^1(\Omega)$, and then
\[
u_m \rightharpoonup u \quad \text{weakly in } H^1(\Omega),\quad u_m \to u \quad \text{strongly in } L^2(\Omega)\text{ and in }L^2(\partial\Omega).
\]
Hence
\begin{equation}
\label{passtraceineq}
\int_{\partial\Omega} u^2 \, d\sigma = 1.
\end{equation}
Moreover, by Fatou we get
\[
\int_{\Omega} u^2 f \, dx \le \lim_{m \to \infty} \int_{\Omega} u_m^2 f \, dx = 0.
\]
and thus $u =0$ a.e. in $\Omega$, and so its trace, contradicting \eqref{passtraceineq}.
\end{proof}
\subsection{A concentration-compactness Lemma}
The analysis of minimizing sequences for \eqref{lambda1fgamma} is challenging because the functional $u \mapsto \int_\Omega f u^2 dx$ is not weakly continuous on $H^1(\Omega)$ for a general weight $f \in M^{N/2}(\Omega)$. To overcome this lack of compactness, we use the concentration-compactness Principle, in the spirit of the well-known results of P.L. Lions (\cite{lions}), and of the result proved by Smets (\cite{smets}) in the Dirichlet case.

To this aim, we need to assume a stronger assumption on $f$. For any $x \in \overline{\Omega}$ and $r>0$, we define
\[
S_{r,f}^x := \lambda_{1,f}(\Omega\cap B_{r}(x))= \inf \left\{ \int_{\Omega \cap B_r(x)} |\nabla \phi|^2 dx \colon \phi \in H_0^1(\Omega \cap B_r(x)), \int_{\Omega \cap B_r(x)} f(x)\phi^2 dx = 1 \right\},
\]
and we set for $x\in \overline\Omega$
 \[
 S_f^x := \lim_{r\to 0^+} S_{r,f}^x.
 \]
 The singularity set $\Sigma_f$ is
\[
\Sigma_f := \{ x \in \overline{\Omega} \mid S_f^x < +\infty \}.
\]\
%The minimal cost of concentration is $S_{*,f} := \inf_{x \in \overline{\Omega}} S_f^x$.
For an interior point $x \in \Omega$ and $r$ sufficiently small, $S_{r,f}^x$ is the first eigenvalue of the Dirichlet problem on $B_{r}(x)$ with weight $f$, $\lambda_{1,f}(B_{r}(x))$. We will make the following two assumptions:
\begin{enumerate}
\item[$(H_{f,1})$] $\overline\Sigma_{f}$ is at most countable;
\item[$(H_{f,2})$] the singularity set $\Sigma_f$ is compactly contained in $\Omega$:
\[
\overline\Sigma_f \cap \partial\Omega = \emptyset.
\]
\end{enumerate}
The first condition avoid strong spikes on a dense subset of $\Omega$; the second, avoid that singularities on the boundary of $\Omega$ appear.

\begin{remark}\label{2.3}
We emphasize that when  $f\in L^{\nicefrac N2}(\Omega)$, it holds that $\Sigma_{f}=\emptyset$.

Indeed in such a case, for $x\in\overline \Omega$
\[
S_{r,f}^{x} \ge \lambda_{1,f}(B_{r}(x)),\qquad \text{and } \lim_{r \to 0^+} \lambda_{1,f}(B_r) = +\infty.
\]
If $x\in \de\Omega$, we may think $f\equiv 1$ outside $\Omega$. Through a change of variables,
\[
\lambda_{1,f}(B_r) = \frac{1}{r^2} \lambda_{1,f_r}(B_1),
\]
where $f_r(z) = f(rz)$, $z=\frac{y-x}{r}$ is the scaled weight function defined on $B_1$.
We look for a lower bound for $\lambda_{1,f_r}(B_1)$. For any test function $v \in H^1_0(B_1)$, using Hölder and Sobolev inequalities we get
\[
\int_{B_1} f_r v^2 dz \le S \|f_r\|_{L^{N/2}(B_1)} \int_{B_1} |\nabla v|^2 dz
\]
where $S$ is the Sobolev constant.
Hence
\[
\frac{\displaystyle\int_{B_1} |\nabla v|^2 dz}{\displaystyle\int_{B_1} f_r v^2 dz} \ge \frac{1}{S \|f_r\|_{L^{N/2}(B_1)}}.
\]
Passing to the infimum, we may conclude that
\[
\lambda_{1,f}(B_r) \ge \frac{1}{r^2} \cdot \frac{1}{S \|f_r\|_{L^{N/2}(B_1)}}=\frac{1}{S \left(\displaystyle\int_{B_r} |f|^{\frac N2} dy\right)^{\frac2N}}.
\]
Being $f\in L^{\nicefrac N2}(\Omega)$, we conclude that
\[
\lim_{r \to 0^+} \lambda_{1,f}(B_r) = +\infty.
\]
\end{remark}
\begin{example} If $f=\frac{1}{|x|^{2}}$ and $0\in \Omega$, then $S^{0}_{f}=\frac{(N-2)^{2}}{4}$ is the Hardy constant, and $\Sigma_{f}=\{0\}$.
\end{example}

 We are ready to state and prove the concentration-compactness lemma.
\begin{lemma} \label{cclemma}
Let $\Omega \subset \mathbb{R}^N$ be a bounded, open, and connected set, and let $f \in M^{N/2}(\Omega)$ be a non-negative weight function, and $(H_{f,1})-(H_{f,2})$ hold.
Consider a bounded sequence $\{u_n\}$ in $H^1(\Omega)$ with $u_n \rightharpoonup u_0$ weakly in $H^1(\Omega)$. We may assume that there exist non-negative, finite Radon measures $\mu, \nu, \tilde{\mu}$ on $\Omega$
such that:
\begin{enumerate}
\item[a.] $|\nabla(u_n - u_0)|^2  \rightharpoonup \mu$ weakly in $\mathcal M^{+}(\Omega)$;
\item[b.] $f|u_n - u_0|^2  \rightharpoonup \nu$ weakly in $\mathcal M^{+}(\Omega)$;
\item[c.]  $|\nabla u_n|^2  \rightharpoonup \tilde{\mu}$ weakly in $\mathcal M^{+}(\Omega)$,
\end{enumerate}
where $\mathcal M^{+}(\Omega)$ is the cone of positive finite Radon measures over $\Omega$. Then, the following properties hold:
\begin{enumerate}
    \item The measure $\nu$ is purely atomic and supported on $ \Sigma_f$. That is,
    \[ \nu = \sum_{i \in I} \nu_i \delta_{x_i}, \quad \text{with } x_i \in \Sigma_f \text{ and } \nu_i > 0, \]
    where $I$ is an at most countable set.
    \item The measure $\mu$ is bounded from below by $\nu$, in the sense that
    \begin{equation}
    \label{ineqmu}
     \mu \ge \sum_{i \in I} \nu_i S_f^{x_i} \delta_{x_i},
    \end{equation}
   and the total energy measure $\tilde{\mu}$ satisfies:
      \begin{equation}
    \label{ineqmutilde}
    \tilde{\mu} \ge |\nabla u_0|^2 + \sum_{i \in I} \nu_i S_f^{x_i} \delta_{x_i}.
    \end{equation}
\end{enumerate}
\end{lemma}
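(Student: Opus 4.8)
The plan is to follow the classical Concentration-Compactness strategy of P.L.\ Lions, adapted to the weighted setting as in Smets \cite{smets}, with the main new ingredient being the localization of the singular measure $\nu$ on the set $\Sigma_f$ defined via the local eigenvalues $S^x_f$. Set $w_n := u_n - u_0$, so that $w_n \rightharpoonup 0$ weakly in $H^1(\Omega)$, $w_n \to 0$ strongly in $L^2(\Omega)$ and a.e.\ (by Rellich), and $|\nabla w_n|^2 \rightharpoonup \mu$, $f|w_n|^2 \rightharpoonup \nu$ in $\mathcal{M}^+(\Omega)$. The first step is to prove the \emph{local inequality}: for every $x \in \overline\Omega$,
\begin{equation*}
\mu(\{x\}) \ge S^x_f\, \nu(\{x\}),
\end{equation*}
with the convention $S^x_f \cdot 0 = 0$ even when $S^x_f = +\infty$. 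To do this, fix $x$ and a cutoff $\varphi \in C_c^\infty(\R^N)$ with $\varphi \equiv 1$ near $x$, $0 \le \varphi \le 1$, supported in a small ball $B_r(x)$; then $\varphi w_n \in H_0^1(\Omega \cap B_r(x))$ (using $(H_{f,2})$ to ensure that near an interior singularity the balls stay inside $\Omega$, while boundary points are handled by the vanishing of $\nu$ there, as in Remark \ref{2.3}). By definition of $S^x_{r,f}$,
\begin{equation*}
\int_{\Omega\cap B_r(x)} |\nabla(\varphi w_n)|^2\,dx \;\ge\; S^x_{r,f} \int_{\Omega\cap B_r(x)} f\,(\varphi w_n)^2\,dx.
\end{equation*}
Expanding $|\nabla(\varphi w_n)|^2 = \varphi^2|\nabla w_n|^2 + 2\varphi w_n \nabla\varphi\cdot\nabla w_n + w_n^2|\nabla\varphi|^2$, the last two terms tend to $0$ as $n\to\infty$: the middle term by Cauchy--Schwarz, the boundedness of $\nabla w_n$ in $L^2$, and $w_n \to 0$ in $L^2$; the last term likewise. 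Passing to the limit and using the weak convergence of measures (choosing $r$ so that $\mu$ and $\nu$ put no mass on $\partial B_r(x)$) gives $\int \varphi^2\,d\mu \ge S^x_{r,f}\int\varphi^2\,d\nu$; letting the support of $\varphi$ shrink to $\{x\}$ and then $r\to 0^+$ yields $\mu(\{x\}) \ge S^x_f\,\nu(\{x\})$.

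The second step shows that $\nu$ is concentrated on $\Sigma_f$ and is purely atomic. For any compact $K \subset \overline\Omega$ with $K \cap \overline\Sigma_f = \emptyset$, one has $S^x_f = +\infty$ for all $x \in K$; covering $K$ by finitely many balls on which $S^x_{r,f}$ is arbitrarily large and running the same test-function computation (now with $\varphi$ supported in such a ball) forces $\nu(B) = 0$ on each such ball, hence $\nu(K) = 0$. Therefore $\nu$ is supported on $\overline\Sigma_f$, which by $(H_{f,1})$ is at most countable; a finite (Radon) measure supported on an at most countable set is purely atomic, so $\nu = \sum_{i\in I}\nu_i\delta_{x_i}$ with $x_i \in \overline\Sigma_f$ and $\nu_i > 0$, $I$ at most countable. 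One checks that in fact the atoms lie in $\Sigma_f$ itself: if $x_i \in \overline\Sigma_f \setminus \Sigma_f$ then $S^{x_i}_f = +\infty$, and the local inequality with $S^{x_i}_f = +\infty$ combined with $\mu$ finite forces $\nu(\{x_i\}) = 0$, a contradiction. (Alternatively one simply records the atoms as lying in $\overline\Sigma_f$ and notes $\nu_i>0$ is only possible at points of $\Sigma_f$.)

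The third step assembles the lower bounds on $\mu$ and $\tilde\mu$. Since $\mu \ge 0$ and the $\delta_{x_i}$ have disjoint supports, summing the local inequality over $i \in I$ gives $\mu \ge \sum_{i\in I}\mu(\{x_i\})\delta_{x_i} \ge \sum_{i\in I} S^{x_i}_f\nu_i\,\delta_{x_i}$, which is \eqref{ineqmu}. For \eqref{ineqmutilde}, I use the Brezis--Lieb-type splitting at the level of measures: for any $\varphi \in C_c(\Omega)$ with $0\le\varphi\le1$, weak $H^1$ convergence gives $\int \varphi\,d\tilde\mu = \lim_n \int\varphi|\nabla u_n|^2 = \lim_n \int\varphi\big(|\nabla u_0|^2 + |\nabla w_n|^2 + 2\nabla u_0\cdot\nabla w_n\big) = \int\varphi|\nabla u_0|^2\,dx + \int\varphi\,d\mu$, since $\int\varphi\,\nabla u_0\cdot\nabla w_n \to 0$ by weak convergence of $\nabla w_n$. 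Hence $\tilde\mu = |\nabla u_0|^2\,dx + \mu$ as measures, and combining with \eqref{ineqmu} and the mutual singularity of $|\nabla u_0|^2\,dx$ and $\sum_i\nu_i S^{x_i}_f\delta_{x_i}$ yields \eqref{ineqmutilde}.

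The main obstacle is the first step in the neighbourhood of the boundary: one must be sure that the truncated functions $\varphi w_n$ are legitimately admissible in the definition of $S^x_{r,f}$ (i.e.\ lie in $H_0^1(\Omega\cap B_r(x))$) and, more importantly, that no mass of $\nu$ escapes to $\partial\Omega$. This is exactly where hypothesis $(H_{f,2})$ (so $\overline\Sigma_f\cap\partial\Omega=\emptyset$) is essential: it guarantees a fixed positive distance between $\overline\Sigma_f$ and $\partial\Omega$, so near the boundary $S^x_f=+\infty$ and the argument of Step 2 applies there as well, ruling out boundary concentration; near an interior singular point the ball $B_r(x)$ can be taken inside $\Omega$ so that $\Omega\cap B_r(x)=B_r(x)$ and $\varphi w_n\in H_0^1(B_r(x))$ with no boundary trace issues. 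A secondary technical point is the standard one of choosing the radii $r$ (and the supports of the cutoffs) to avoid the at-most-countably-many values charged by $\mu$ and $\nu$ on sphere boundaries, which is harmless.
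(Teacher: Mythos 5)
Your proof is correct and follows essentially the same Lions--Smets strategy as the paper: localized test functions of the form $\varphi w_n$ (the paper uses $v_n(\phi\xi_i)^{1/2}$ with a partition of unity) fed into the definition of the local eigenvalue $S^x_{r,f}$, the finite-covering argument to annihilate $\nu$ on compacts disjoint from $\overline\Sigma_f$, countability from $(H_{f,1})$ to conclude atomicity, and the use of $(H_{f,2})$ to keep the cutoffs strictly inside $\Omega$. Two remarks are worth making.

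First, your treatment of $\tilde\mu$ is a genuine (if small) simplification of the paper's. You prove the exact measure identity $\tilde\mu = |\nabla u_0|^2\,dx + \mu$ by expanding $|\nabla u_n|^2 = |\nabla u_0|^2 + |\nabla w_n|^2 + 2\nabla u_0\cdot\nabla w_n$ and killing the cross term by weak $L^2$ convergence of $\nabla w_n$ against the fixed function $\varphi\nabla u_0$. The paper instead proves separately that $\tilde\mu \ge |\nabla u_0|^2$ (by weak lower semicontinuity) and that $\tilde\mu(\{x\}) = \mu(\{x\})$ at atoms, and then invokes the mutual singularity of the absolutely continuous part with Dirac masses. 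Your identity makes \eqref{ineqmutilde} an immediate consequence of \eqref{ineqmu}; the appeal to mutual singularity in your last step is then superfluous (it is exactly what the identity replaces).

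Second, a caution on precision rather than a gap: your opening paragraph announces the local inequality $\mu(\{x\})\ge S^x_f\,\nu(\{x\})$ ``for every $x\in\overline\Omega$,'' and the derivation as written uses $\varphi w_n\in H_0^1(\Omega\cap B_r(x))$. For $x\in\partial\Omega$ this is false: $\varphi w_n$ is cut off on $\partial B_r(x)\cap\Omega$ but does not vanish on $\partial\Omega\cap B_r(x)$, so it is not an admissible competitor in $S^x_{r,f}$. You flag this yourself in the ``main obstacle'' paragraph and ultimately use exactly the paper's device: thanks to $(H_{f,2})$ one works with cutoffs $\phi\in C_c^\infty(\Omega)$ whose supports (and hence the covering balls) can be taken strictly interior, so $H_0^1$-admissibility is automatic and the boundary is never probed. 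The opening claim should therefore be stated for interior $x$ (or for atoms, which by $(H_{f,2})$ all lie in the interior); as written it overreaches, even though the rest of the argument does not actually rely on the boundary case. The paper is careful to phrase the first step as showing $\nu(\phi)=0$ for $\phi\in C_c^\infty(\Omega)$ with $\operatorname{supp}\phi\cap\overline\Sigma_f=\emptyset$, which avoids this issue from the outset.

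Apart from these points, your handling of the atom location ($\nu(\{x\})=0$ at points of $\overline\Sigma_f\setminus\Sigma_f$ because $\mu$ is finite while $S^x_f=+\infty$) and the passage from the pointwise inequality to \eqref{ineqmu} by summing over the countably many atoms both match the paper.
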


\begin{proof}
The proof adapts the argument by P.L. Lions (\cite{lions}) and Smets (\cite{smets}), adding the additional hypothesis $\overline\Sigma_f \cap \partial\Omega = \emptyset$ in order to manage a sequence in $H^1(\Omega)$. In what follows, we set $v_n = u_n - u_0$.

We first claim that {$\nu$ is supported on $\overline\Sigma_f$}. This is equivalent to proving that for any test function $\phi \in C_c^\infty(\Omega)$ with $\text{supp}(\phi) \cap \overline\Sigma_f = \emptyset$, we have $\nu(\phi) = \int_\Omega \phi \, d\nu = 0$. Let $\phi$ be such a test function. We may assume that $\phi\ge 0$ and $\sqrt{\phi}$ is smooth. For any $x\in \text{supp}(\phi)$, since $x \notin \overline\Sigma_f$, by definition $S_f^{x} = +\infty$. This means that for any large number $M$, we can choose the radius $r$ of each ball $B_{r}$ small enough such that $S_{r,f}^{x} > M$. We can then extract a finite open sub-covering $\{B_{r_{i}}(x_{i})\}_{i=1,\ldots,m}$. Because we assumed $\overline\Sigma_f \cap \partial\Omega = \emptyset$, we can also ensure that the support of $\phi$ is strictly inside $\Omega$. This means we can choose the balls $\{B_i\}$ to be strictly contained in $\Omega$.
Now, let $\{\xi_i\}_{i=1}^m$ be a smooth partition of unity subordinate to this cover. Consider the test functions $w_{n,i} = v_n \cdot (\phi\xi_i)^{1/2}$. Since $\phi\xi_i$ has compact support inside the ball $B_i \subset\subset \Omega$, the function $w_{n,i}$ belongs to $H_0^1(B_i)$. It is therefore an admissible test function for the infimum defining $S_{r_i,f}^{x_i}$. Applying the definition, we have:
\[
\int_\Omega |\nabla w_{n,i}|^2 dx \ge S_{r_i,f}^{x_i} \int_\Omega f w_{n,i}^2 dx > M \int_\Omega f v_n^2 \phi\xi_i dx.
\]
Then
\begin{multline}
\label{passaggio}
\nu(\phi)  =\lim _{n \rightarrow \infty} \int_{\Omega} f v_{n}^{2} \phi=\sum_{i=1}^m \lim _{n \rightarrow \infty} \int_{\Omega} f v_{n}^{2} \phi \xi_i \\  \leqslant \frac{1}{M} \sum_{i=1}^m \overline{\lim _{n \rightarrow \infty}} \int_\Omega |\nabla w_{n,i}|^2 dx =\frac{1}{M} \sum_{i=1}^m \mu\left(\phi \xi_i\right)=\frac{1}{M} \mu(\phi) \leqslant\frac{\|\mu\|\|\phi\|_{\infty} }{M}.
\end{multline}

The first equality in the second line of \eqref{passaggio}
follows by using the elementary inequality $| |a+b|^2 - |a|^2 | \le \epsilon|a|^2 + C_\epsilon|b|^2$:
    \begin{equation}
    \label{passaggio2}
    \left| \int_\Omega |\nabla w_{n,i}|^2 - \int_\Omega \phi\xi_i |\nabla v_n|^2 \right| \le \epsilon \int_\Omega \phi\xi_i |\nabla v_n|^2 + C_\epsilon \int_\Omega |v_n|^2 \left|\nabla((\phi\xi_i)^{1/2})\right|^2.
    \end{equation}
    Then the first term on the right can be made small by choosing $\epsilon$ small (since $\int \phi\xi_i |\nabla v_n|^2$ converges). The second term tends to zero as $n \to \infty$ because $v_n \to 0$ strongly in $L^2(\Omega)$ (since $v_n \rightharpoonup 0$ in $H^1$).

As $M$ can be arbitrarily large, we conclude by \eqref{passaggio} that $\nu(\phi)=0$. This proves that the support of $\nu$ is contained in $\overline\Sigma_f$, proving the claim.

Now, let $x \in \Sigma_f$; for each $j \in \mathbb{N}$ there exists $r_j>0$ such that $S_{r_j, f}^x>S_f^x-\frac 1j$.
We can choose a sequence of smooth cutoff functions $\{\psi_j\}$ that are supported in balls $B_{r_j}(x)$ with $r_j \to 0$ as $j \rightarrow \infty$, and $\psi_j \equiv 1$ near $x$. We have, arguing in particular as in \eqref{passaggio2},
\begin{multline*}
\mu(\{x\})  =\lim _{j \rightarrow \infty} \mu\left(\psi_j^2\right)  =\lim _{j \rightarrow \infty} \overline{\lim _{n \rightarrow \infty}} \int_{\Omega}\left|\nabla\left(v_{n} \psi_j\right)\right|^2 \\
 \geqslant \lim _{j \rightarrow \infty}\left\{\left(S_f^x-\frac1j\right) \overline{\lim _{n \rightarrow \infty}} \int_{\Omega} f v_{n}^2 \psi_j^2\right\} =\lim _{j \rightarrow \infty}\left(S_f^x-\frac1j\right) \nu\left(\psi_j^2\right)  =S_f^x \nu(\{x\}).
\end{multline*}
Similarly, if $x \in \overline{\Sigma_f} \backslash \Sigma_f$, it holds that $\mu(\{x\})>j \nu(\{x\})$ for each $j$ so that $\nu(\{x\})=0$. This proves that there are no Dirac masses on $\overline\Sigma_f\setminus \Sigma_{f}$. Moreover, $\mu(\{x\})=\tilde{\mu}(\{x\})$. Indeed
\begin{multline*}
\left|\int_{\Omega}\left| \nabla v_{n}\right|^2 \psi_j^2 dx-\int_{\Omega}\left|\nabla u_n\right|^2 \psi_j^2dx \right| \\
 \leqslant \varepsilon \int_{\Omega}\left|\nabla v_{n}\right|^2 \psi_j^2+c(\varepsilon) \int_{\Omega}|\nabla u|^2 \psi_j^2 \leqslant \varepsilon K+c(\varepsilon) o(1), \quad j \rightarrow \infty.
\end{multline*}
%\textcolor{red}{and the proof of \eqref{ineqmu} is completed. Now, being
%\[
%\tilde\mu=\mu+ |\nabla u_{0}|^{2}dx,
%\]
%we obtain also \eqref{ineqmutilde} and the proof is completed.
%}
%\textcolor{blue}{Da cancellare?
Now, if $\psi \in C^{\infty}_{c}\left(\mathbb{R}^N\right)$ is positive,% from the definition
%\[
%\tilde \mu= \mu+|\nabla u_{0}|^{2}dx.
%\]
 the weakly sequentially lower semi-continuity of $v\in L^2(\Omega) \mapsto \int_{\Omega} \psi|v|^2$, we get $\tilde\mu \geqslant|\nabla u_{0}|^2$. One can conclude by the orthogonality of $|\nabla u_{0}|^2$ with Dirac masses.
This completes the proof.%}
\end{proof}

\section{Some properties of the first weighted Robin eigenvalue}
In this section we list and prove several properties of $\lambda_{1,f,\gamma}(\Omega)$. For the sake of completeness, we consider both the cases $\gamma>0$ and $\gamma<0$.
\begin{prop}[Simplicity of the first eigenvalue]
Let $\Omega \subset \mathbb{R}^N$ be a bounded, connected open set with Lipschitz boundary.
Let $f \in M^{N/2}(\Omega)$ be a weight function such that $f > 0$ a.e. in $\Omega$. If $\lambda_{1,f,\gamma}$ is attained by a minimizer, then this is unique up to a multiplicative constant.
\end{prop}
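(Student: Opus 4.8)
The plan is to use the standard strategy for simplicity of a first eigenvalue of a linear self-adjoint problem: show that any minimizer does not change sign, and then exploit strict convexity along a suitably chosen curve of functions to conclude that two positive minimizers must be proportional. First I would record the Euler--Lagrange equation: if $\psi$ attains $\lambda_{1,f,\gamma}(\Omega)$ with the normalization $\int_\Omega f\psi^2=1$, then for all $\varphi\in H^1(\Omega)$,
\[
\int_\Omega \nabla\psi\cdot\nabla\varphi\,dx+\gamma\int_{\partial\Omega}\psi\varphi\,d\sigma=\lambda_{1,f,\gamma}(\Omega)\int_\Omega f\psi\varphi\,dx.
\]
Since $|\nabla|\psi||=|\nabla\psi|$ a.e. and the boundary term and the weighted term are unchanged under $\psi\mapsto|\psi|$, the function $|\psi|$ is also a minimizer; hence without loss of generality we may take $\psi\ge 0$. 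A strong maximum principle argument (or, for the singular weight, a local Harnack inequality away from $\Sigma_f$ together with the fact that $\psi\not\equiv 0$ and $\Omega$ is connected) then yields $\psi>0$ a.e. in $\Omega$; one only needs $\psi$ to be a nonnegative supersolution of a linear elliptic equation with locally bounded coefficients, which holds on $\Omega\setminus\Sigma_f$, and $\Sigma_f$ is a countable set compactly contained in $\Omega$, hence removable for this purpose.

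Next, suppose $\psi_1,\psi_2$ are two minimizers; by the above we may assume $\psi_1,\psi_2>0$ a.e. with $\int_\Omega f\psi_1^2=\int_\Omega f\psi_2^2=1$. I would then use the convexity trick of Benguria (or the Picone-type identity): set, for $t\in[0,1]$,
\[
\sigma_t(x)=\bigl((1-t)\psi_1(x)^2+t\,\psi_2(x)^2\bigr)^{1/2},
\]
and show that $t\mapsto\int_\Omega|\nabla\sigma_t|^2\,dx+\gamma\int_{\partial\Omega}\sigma_t^2\,d\sigma$ is convex, with the gradient term \emph{strictly} convex unless $\psi_2=c\psi_1$ for a constant $c$. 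Concretely, the pointwise inequality
\[
\bigl|\nabla\sigma_t\bigr|^2\le (1-t)|\nabla\psi_1|^2+t|\nabla\psi_2|^2,
\]
with equality a.e. only where $\nabla(\psi_1/\psi_2)=0$, is the heart of the matter; meanwhile the boundary term is \emph{linear} in $t$ because $\sigma_t^2=(1-t)\psi_1^2+t\psi_2^2$ pointwise, so it contributes no obstruction, and the denominator is constant since $\int_\Omega f\sigma_t^2=(1-t)+t=1$. Evaluating the Rayleigh quotient at $\sigma_t$ and using that both endpoints equal $\lambda_{1,f,\gamma}(\Omega)$ while the quotient is $\le$ a function that is $\le\lambda_{1,f,\gamma}(\Omega)$ with equality forcing $\nabla(\psi_1/\psi_2)=0$ a.e., we conclude $\psi_1/\psi_2$ is constant on the connected set $\Omega$, i.e. $\psi_1=c\psi_2$.

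The main obstacle I anticipate is justifying the computations at the level of regularity available here: the minimizer need not be bounded or continuous near the singular set $\Sigma_f$, so the pointwise manipulations of $\nabla\sigma_t$ and the quotient $\psi_1/\psi_2$ require care. I would handle this by first establishing that $\psi_1/\psi_2$ and $\psi_2/\psi_1$ lie in a suitable space (using local boundedness away from $\Sigma_f$ and the integrability $f\psi_i^2\in L^1$), then carrying out the convexity argument on $\Omega\setminus\Sigma_f$ — which is an open set whose complement in $\Omega$ is countable, hence of zero capacity — and finally using a cutoff/truncation argument near $\Sigma_f$ to pass to the limit, exactly as the need to avoid the singular set motivated the hypotheses $(H_{f,1})$--$(H_{f,2})$. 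An alternative, if one wants to bypass regularity issues entirely, is to argue directly with the test function $\varphi=\psi_1-\psi_2^2/\psi_1$ (Picone) in the weak formulation for $\psi_2$ and symmetrically, adding the two identities; the cross terms telescope and one is left with a nonnegative integrand that must vanish, again forcing proportionality.
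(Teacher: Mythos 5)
Your main approach (the Benguria ``hidden convexity'' trick with $\sigma_t = ((1-t)\psi_1^2 + t\psi_2^2)^{1/2}$) is genuinely different from the paper's, which uses a regularized Picone identity: the paper tests the Euler--Lagrange equation for $u_1$ with $\phi_1 = u_2^2/(u_1 + 1/n)$, compares it with the equation for $u_2$ tested against $u_2$ itself, and after passing to the limit extracts $\lim_n \int_\Omega |\nabla u_2 - \tfrac{u_2}{u_1+1/n}\nabla u_1|^2\,dx = 0$, forcing $u_2 = c\,u_1$. Both are textbook routes to simplicity. The convexity route has one real advantage in this Robin setting, which you correctly spotlight: the trace term is \emph{linear} in $t$ because $\sigma_t^2 = (1-t)\psi_1^2 + t\psi_2^2$ pointwise, so it contributes harmlessly; moreover $\sigma_t$ is automatically in $H^1(\Omega)$ thanks to the pointwise bound $|\nabla\sigma_t|^2 \le (1-t)|\nabla\psi_1|^2 + t|\nabla\psi_2|^2$, which sidesteps the admissibility concern one has to address for the test function $u_2^2/(u_1+1/n)$. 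The paper's regularization, in turn, neatly avoids division-by-zero issues without invoking Harnack-type lower bounds on the eigenfunction.

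Two small remarks. First, your discussion of $\Sigma_f$ and the hypotheses $(H_{f,1})$--$(H_{f,2})$ is not actually needed: this proposition assumes only that a minimizer exists, not the structural hypotheses on $f$, and the paper's positivity step is handled directly by the strong maximum principle of Brezis--Ponce rather than by any argument tied to the singular set. Second, be a bit careful when you assert ``equality forcing $\nabla(\psi_1/\psi_2)=0$ a.e.\ implies $\psi_1/\psi_2$ is constant'': to make sense of the vanishing of a Sobolev gradient on the connected set $\Omega$ you need $\psi_1/\psi_2$ to lie in $W^{1,1}_{\mathrm{loc}}(\Omega)$, which requires local positive lower bounds on $\psi_2$; this is exactly where Harnack/maximum-principle considerations enter, independently of the singular weight, and is the analogue of the regularization issue the paper handles via $u_1 + 1/n$. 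Neither issue breaks your argument, but both should be made explicit to match the level of justification given in the paper.
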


\begin{proof}

Let $u \in H^1(\Omega)$ be a minimizer for $\lambda_{1,f,\gamma}$. Then $|u|$ is still a minimizer, so we may assume $u\ge 0$. By the maximum principle (see \cite[Cor. 3]{brezponce}), it holds that $u>0$ a.e. in $\Omega$. We now show that any two positive minimizers, $u_1$ and $u_2$, must be proportional. This argument is a classic technique in the spirit of Picone's identity (see also, for example, \cite{cpr}).

Let $u_1 > 0$ and $u_2 > 0$ be two normalized minimizers of $\lambda = \lambda_{1,f,\gamma}$, $\int_\Omega f u_1^2 dx = \int_\Omega f u_2^2 dx = 1$.
 For any test function $\phi \in H^1(\Omega)$:
\begin{align}
\int_\Omega \nabla u_1 \cdot \nabla\phi \,dx + \gamma\int_{\partial\Omega} u_1 \phi \,d\sigma &= \lambda \int_\Omega f u_1 \phi \,dx \label{eq:EL1} \\
\int_\Omega \nabla u_2 \cdot \nabla\phi \,dx + \gamma\int_{\partial\Omega} u_2 \phi \,d\sigma &= \lambda \int_\Omega f u_2 \phi \,dx \label{eq:EL2}
\end{align}
 Since $u_1 > 0$ a.e., the function $\phi_1 = \frac{u_2^2}{u_1+\frac1n}$ is a valid test function in $H^1(\Omega)$. We also choose $\phi_2 = u_2$.

Substitute $\phi_1$ into (\ref{eq:EL1}) and $\phi_2$ into (\ref{eq:EL2}):
\begin{align*}
\int_\Omega \nabla u_1 \cdot \nabla\left(\frac{u_2^2}{u_1+\frac{1}{n}}\right) dx + \gamma\int_{\partial\Omega} \frac{u_{1}}{u_{1}+\frac1n} u_2^2 d\sigma &= \lambda \int_\Omega f \frac{u_{1}}{u_{1}+\frac1n} u_2^2 dx. \\[1em]
\int_\Omega |\nabla u_2|^2 dx + \gamma\int_{\partial\Omega}  u_2^2 d\sigma &= \lambda \int_\Omega f u_2^2 dx.
\end{align*}
Passing to the limit in the first equation and comparing, we get
\[
\int_\Omega |\nabla u_2|^2 dx = \lim_{n\to+\infty} \int_\Omega \nabla u_1 \cdot \nabla\left(\frac{u_2^2}{u_1+\frac1n}\right)dx=0
\]
Computing the gradient, we obtain
\[
\int_\Omega |\nabla u_2|^2 dx = \lim_{n\to+\infty}\int_\Omega \left( \frac{2u_2}{u_1+\frac1n}\nabla u_1 \cdot \nabla u_2 - \frac{u_2^2}{\left(u_1+\frac1n\right)^2}|\nabla u_1|^2 \right) dx.
\]
Rearranging all terms to one side, we get:
\[
\lim_{n\to+\infty}\int_\Omega \left| \nabla u_2 - \frac{u_2}{u_1+\frac1n}\nabla u_1 \right|^2 dx = 0.
\]
Then
\[
\nabla u_2 - \frac{u_2}{u_1}\nabla u_1 = 0 \quad \text{a.e. in } \Omega.
\]
Since $\Omega$ is a connected domain, $u_2 = c \cdot u_1$.
Since both functions are normalized and a.e. positive, we must have $c=1$, and the proof is completed.
\end{proof}

\begin{prop} \label{prop:eigenvalue_properties}
Let $\Omega \subset \mathbb{R}^N$ be a bounded, connected open set with a Lipschitz boundary. Let $f \in M^{N/2}(\Omega)$ be a weight function such that $f > 0$ a.e. in $\Omega$. Assume that the structural hypotheses $(H_{f,1})-(H_{f,2})$ hold. Then the first Robin eigenvalue $\lambda_{1,f,\gamma}(\Omega)$, $\gamma\in \R$ has the following properties:
\begin{enumerate}[(i)]
    \item
    \begin{enumerate}[(1)]
    \item For any $\gamma \in \R$, $\lambda_{1,f,\gamma}$ has the sign of $\gamma$; moreover,
    \begin{equation}
        \label{ineq1}
           -\infty< \lambda_{1,f,\gamma}(\Omega) \le \lambda_{1,f}(\Omega)<+\infty.
    \end{equation}
    \item the following limits hold:
    \begin{equation}
    \label{liminfty}
    \lim_{\gamma \to +\infty} \lambda_{1,f,\gamma}(\Omega) = \lambda_{1,f}(\Omega),
    \end{equation}
    and
    \begin{equation}
    \label{liminfty2}
        \lim_{\gamma \to -\infty} \lambda_{1,f,\gamma}(\Omega) = -\infty.
    \end{equation}
    \end{enumerate}
    \item The infimum $\lambda_{1,f,\gamma}(\Omega)$ is attained, provided
    \begin{equation}
    \label{condg}
    \lambda_{1,f,\gamma}(\Omega) < \lambda_{1,f}(\Omega).
    \end{equation}
    Moreover, if $f \in L^{N/2}(\Omega)$, this condition is always satisfied for $\gamma > 0$, and thus a minimizer always exists.

    \item The function $\lambda(\gamma) := \lambda_{1,f,\gamma}(\Omega)$, defined on $\R$, is:
    \begin{enumerate}
        \item non-decreasing and continuous;
        \item differentiable for any $\gamma$ where a minimizer exists (i.e., where \eqref{condg} holds). Its derivative is given by
        \[
\lambda'(\gamma) = \int_{\partial\Omega} u_{\gamma}^{2} d\sigma,
        \]
        where $u_{\gamma}$ is the unique nonnegative minimizer normalized by $\int_\Omega f u_\gamma^2 dx = 1$.
        \item If $f \in L^{N/2}(\Omega)$, then $\bar{\gamma} = +\infty$, and the map is strictly increasing for all $\gamma > 0$.
    \end{enumerate}
\end{enumerate}
\end{prop}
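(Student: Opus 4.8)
The plan is to establish the four groups of statements more or less in the order listed, since each builds on the previous one.

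For part (i)(1), the sign of $\lambda_{1,f,\gamma}$: testing the Rayleigh quotient with the constant function gives $\lambda_{1,f,\gamma}(\Omega) \le \gamma |\partial\Omega| / \int_\Omega f\,dx$, so it is $\le 0$ when $\gamma \le 0$; for the strict sign when $\gamma \ne 0$, note that $\int_\Omega |\nabla\psi|^2 \ge 0$ always, so if $\gamma > 0$ the numerator is strictly positive for $\psi \not\equiv 0$ (using $f>0$ a.e. for the denominator to be finite and positive on a minimizing sequence), whereas if $\gamma < 0$ the constant test function already makes it negative. The bound $\lambda_{1,f,\gamma} \le \lambda_{1,f}$ follows because $H^1_0(\Omega) \subset H^1(\Omega)$ and the boundary term vanishes on $H^1_0$. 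Finiteness of $\lambda_{1,f}(\Omega)$ is immediate from $f \in M^{N/2} \subset L^1$ being positive a.e. The lower bound $\lambda_{1,f,\gamma} > -\infty$ is the first mildly delicate point: combine the weighted trace inequality \eqref{traceweighted} with a well-chosen $\eps$ (say $\eps = 1/(2|\gamma|)$ when $\gamma<0$) to absorb the negative boundary term into the Dirichlet energy, leaving $R_{\gamma,f}[\psi] \ge -C(\eps)$.

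For part (i)(2): the limit \eqref{liminfty2} follows from the constant test function bound $\lambda_{1,f,\gamma} \le \gamma|\partial\Omega|/\int_\Omega f \to -\infty$. The limit \eqref{liminfty} is the heart of part (i): monotonicity (proved in (iii)(a) but needed here, or argued directly from the Rayleigh quotient being pointwise nondecreasing in $\gamma$) gives that $\lim_{\gamma\to+\infty}\lambda_{1,f,\gamma} =: L$ exists and $L \le \lambda_{1,f}$. For the reverse inequality, take near-minimizers $\psi_\gamma$ with $\int_\Omega f\psi_\gamma^2 = 1$ and $R_{\gamma,f}[\psi_\gamma] \le \lambda_{1,f,\gamma} + 1/\gamma \le L+1/\gamma$; then $\gamma \int_{\partial\Omega}\psi_\gamma^2 \le L + 1/\gamma$, forcing $\int_{\partial\Omega}\psi_\gamma^2 \to 0$, while $\int_\Omega|\nabla\psi_\gamma|^2$ stays bounded, so by Proposition~\ref{propequiv} the $\psi_\gamma$ are bounded in $H^1$; extract a weak limit $\psi_*$, which has zero trace, hence $\psi_* \in H^1_0(\Omega)$. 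The subtle point is that $\int_\Omega f\psi_*^2 = 1$ need not pass to the limit because of the concentration-compactness defect (Lemma~\ref{cclemma}); but $\int_\Omega f\psi_*^2$ could only be smaller, and one checks via \eqref{ineqmutilde} that any mass lost at a concentration point $x_i \in \Sigma_f$ contributes at least $S_f^{x_i} \ge \lambda_{1,f}(\Omega)$ to $\liminf \int_\Omega |\nabla\psi_\gamma|^2$, so $L = \liminf R_{\gamma,f}[\psi_\gamma] \ge \lambda_{1,f}(\Omega)$. This concentration-compactness bookkeeping is the main obstacle of the whole proposition.

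For part (ii), existence of a minimizer under \eqref{condg}: take a minimizing sequence normalized by $\int_\Omega f u_n^2 = 1$; it is bounded in $H^1$ by Proposition~\ref{propequiv} (and the lower bound from (i) to control the trace term), so $u_n \rightharpoonup u_0$. Apply Lemma~\ref{cclemma}: write $1 = \int_\Omega f u_0^2 + \nu(\Omega) + (\text{terms})$ via the weak convergence, and $\lambda_{1,f,\gamma} = \lim R_{\gamma,f}[u_n] \ge \int_\Omega|\nabla u_0|^2 + \gamma\int_{\partial\Omega}u_0^2 + \sum_i \nu_i S_f^{x_i}$ using \eqref{ineqmutilde} and the fact that the trace is weakly continuous (compact embedding $H^1 \hookrightarrow L^2(\partial\Omega)$). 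If the normalization mass splits as $\theta := \int_\Omega f u_0^2$ and $1-\theta = \sum \nu_i$, then since $S_f^{x_i} \ge \lambda_{1,f}(\Omega)$ and $\int_\Omega |\nabla u_0|^2 + \gamma\int_{\partial\Omega}u_0^2 \ge \lambda_{1,f,\gamma}(\Omega)\,\theta$, we get $\lambda_{1,f,\gamma} \ge \theta\lambda_{1,f,\gamma} + (1-\theta)\lambda_{1,f}$; if $\theta < 1$ this forces $\lambda_{1,f,\gamma} \ge \lambda_{1,f}$, contradicting \eqref{condg}. Hence $\theta = 1$, no mass is lost, $u_0$ is admissible, and lower semicontinuity makes it a minimizer. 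When $f \in L^{N/2}$, Remark~\ref{2.3} gives $\Sigma_f = \emptyset$, so $\nu \equiv 0$ automatically and \eqref{condg} is superfluous; and for $\gamma>0$ the constant function gives $\lambda_{1,f,\gamma} \le \gamma|\partial\Omega|/\int f$ — actually one wants the strict inequality $\lambda_{1,f,\gamma} < \lambda_{1,f}$, which holds because a Dirichlet near-minimizer $\psi$ can be perturbed to have nonzero trace lowering... more simply: if $\lambda_{1,f,\gamma} = \lambda_{1,f}$ then the argument above shows any minimizer lies in $H^1_0$, but then its outward normal derivative would have to vanish too by the Robin condition, forcing it to be zero by unique continuation — so strictness holds, giving existence.

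For part (iii): monotonicity of $\lambda(\gamma)$ is immediate since $R_{\gamma,f}[\psi]$ is nondecreasing in $\gamma$ for each fixed $\psi$. Concavity-type comparison: for $\gamma_1 < \gamma_2$, testing the $\lambda(\gamma_2)$-problem with the $\lambda(\gamma_1)$-minimizer $u_{\gamma_1}$ gives $\lambda(\gamma_2) \le \lambda(\gamma_1) + (\gamma_2-\gamma_1)\int_{\partial\Omega}u_{\gamma_1}^2$, and symmetrically; these sandwich estimates give both Lipschitz continuity on the open set where minimizers exist and, by letting $\gamma_2 \to \gamma_1$, the derivative formula $\lambda'(\gamma) = \int_{\partial\Omega}u_\gamma^2$ (one-sided derivatives agree because the trace integral is continuous in $\gamma$, which follows from $H^1$-convergence of $u_\gamma$ as $\gamma$ varies — itself a consequence of uniqueness plus a standard compactness argument). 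Continuity at the boundary point $\bar\gamma$ where \eqref{condg} degenerates and at $\gamma \to +\infty$ is covered by part (i). Strict monotonicity when $f \in L^{N/2}$ and $\gamma>0$: by the derivative formula it suffices that $\int_{\partial\Omega}u_\gamma^2 > 0$, i.e. that the minimizer does not lie in $H^1_0$ — which is exactly the strictness argument from part (ii); and $\bar\gamma = +\infty$ because \eqref{condg} holds for all $\gamma>0$ in that case.

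The single hardest ingredient, used repeatedly, is the concentration-compactness accounting: controlling exactly how much of the normalization mass $\int_\Omega f u_n^2 = 1$ can escape to $\Sigma_f$ and confirming each escaped unit costs at least $\lambda_{1,f}(\Omega)$ of Dirichlet energy, via \eqref{ineqmutilde}.
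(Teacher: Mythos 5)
Your proposal is correct and follows essentially the same approach as the paper: constant test functions and the weighted trace inequality for (i)(1) and \eqref{liminfty2}, the concentration-compactness accounting (with $S_f^{x_i}\ge\lambda_{1,f}(\Omega)$ forcing $\nu\equiv 0$) for (ii) and \eqref{liminfty}, and the one-sided sandwich estimates with the normalized minimizer for (iii). The one place you add something the paper glosses over is the argument that, when $f\in L^{N/2}$, the minimizer cannot lie in $H^1_0(\Omega)$ — your unique-continuation/Hopf-type observation (or, even more elementarily, testing the Euler–Lagrange equation of a putative $H^1_0$ minimizer against $\phi\equiv 1$ yields $0=\lambda_{1,f}\int_\Omega f u_0\,dx>0$) fills a gap the paper leaves implicit.
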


\begin{proof}
We split the proof in several steps.
\medskip

\textit{\underline{Step 1}: For any $\gamma\in \R$, $\lambda_{1,f,\gamma}(\Omega)$ is finite}. If $\gamma\ge 0$, this is obvious by the definition. If $\gamma<0$, using the trace inequality \eqref{traceweighted}, we have, for any $\psi\in H^{1}$,
\[
\int_{\Omega}|\nabla \psi|^{2}dx+\gamma \int_{\de\Omega} \psi^{2}d\sigma \ge (1-\eps |\gamma|)\int_{\Omega}|\nabla \psi|^{2}dx -|\gamma|C(\eps)\int_{\Omega}\psi^{2}fdx.
\]
Hence, if $\eps\le 1/|\gamma|$, we get that $R_{\gamma}[\psi]$ is bounded from below in $H^{1}$, which implies that $\lambda_{1,f,\gamma}(\Omega)$ is finite.

In all the following steps of the proof, we will always denote by $\{u_n\}_{n\in\mathbb{N}} \subset H^1(\Omega)$ a normalized minimizing sequence for $\lambda_{1,f,\gamma}(\Omega)$, in the sense that
\begin{equation}
\label{normmin}
   \lambda_{1,f,\gamma}(\Omega)=   \lim_{n\to\infty} \left( \int_\Omega |\nabla u_n|^2 dx + \gamma \int_{\partial\Omega} u_n^2 d\sigma \right),\qquad \int_\Omega f(x) u_n^2 dx = 1.
\end{equation}
As $\lambda_{1,f,\gamma}$ is finite, $u_{n}$ will be bounded in $H^{1}(\Omega)$, and then weakly convergent in $H^{1}$ and strongly in $L^{2}(\Omega)$ and $L^{2}(\de\Omega)$, respectively, to a function $u_{0}\in H^{1}(\Omega)$.
\medskip

\textit{\underline{Step 2:} proof of i)(1)}.
If $\gamma=0$, then $\lambda_{1,f,\gamma}=0$, and if $\gamma<0$, it holds $\lambda_{1,f,\gamma}\le \gamma\frac{P(\Omega)}{|\Omega|}< 0$ by taking constant test functions. So let us suppose $\gamma>0$. In order to prove that $\lambda_{1,f,\gamma}(\Omega)>0$, we argue by contradiction. Assume that $\lambda_{1,f,\gamma}(\Omega) = 0$. Since $\gamma > 0$ we have by \eqref{normmin}:
\begin{equation} \label{eq:grad_trace_zero}
    \lim_{n\to\infty} \|\nabla u_n\|_{L^2(\Omega)} = 0 \quad \text{and} \quad \lim_{n\to\infty} \|u_n\|_{L^2(\partial\Omega)} = 0.
\end{equation}
This implies that $\nabla u_n \to 0$ strongly in $L^2(\Omega)^N$, then $\nabla u_0 = 0$, which implies that $u_0$ is a constant, say $K$, since $\Omega$ is connected. By the compactness of the trace operator, the weak convergence $u_n \rightharpoonup u_0$ in $H^1(\Omega)$ implies strong convergence of the traces $u_n \to u_0$ in $L^2(\partial\Omega)$. From \eqref{eq:grad_trace_zero}, we get $u_0 \equiv 0$.

On the other hand, by the H\"older inequality \eqref{holdlor} and the Sobolev inequality \eqref{sobolor} it holds that
\begin{equation*}
%\label{stimafin}
1=\int_{\Omega} fu_{n}^{2}dx \le \|f\|_{\nicefrac N2,\infty,\Omega}\|u_{n}\|^{2}_{\frac{2N}{N-2},2,\Omega} \le C\|f\|_{\nicefrac N2,\infty,\Omega}\|u_{n}\|^{2}_{H^{1}(\Omega)}
\end{equation*}
and this is a contradiction with $\|u_n\|_{H^1(\Omega)}\to 0$. This means that $\lambda_{1,f,\gamma}(\Omega)$ must be positive.
\medskip

\textit{\underline{Step 3:} proof of (ii)}. Let us prove the existence of the minimum under the assumptions $(H_{f,1})-(H_{f,2})$ and the condition $\lambda_{1,f,\gamma}<\lambda_{1,f}$. Let $u_{n}$ a minimizing normalized sequence converging weakly in $H^{1}(\Omega)$ to $u_{0}$ and strongly in $L^{2}(\Omega)$ and $L^{2}(\de\Omega)$, as the previous step. We aim to apply the concentration-compactess Lemma \ref{cclemma}. To this end, using the notation of Section 2.4, by definition it holds
\[
\lambda_{1,f}(B_{r}(x)) = S^{x}_{r,f},
\]
for $x\in\Omega$ and $r$ sufficiently small. Then, by inclusion,
\[
\lambda_{1,f}(\Omega) \le \lambda_{1,f}(B_{r}(x)) \le S_{f}^{x}.
\]
Hence from \eqref{ineqmu} we get
\begin{equation}
\label{eqqq}
\mu \ge \lambda_{1,f}(\Omega) \nu.
\end{equation}
\textit{Case $\gamma>0$.} First, we show that $u_{0}\not\equiv 0$. If by contradiction $u_{0}\equiv 0$ (and then $\tilde \mu=\mu$), using also \eqref{eqqq} and \eqref{ineqmutilde}
\[
\lambda_{1,f,\gamma}(\Omega)=\lim_{n\to+\infty}\left[ \int_{\Omega}|\nabla u_{n}|^{2} dx+\gamma\int_{\de\Omega}u_{n}^{2}dx \right] = \mu(\Omega) \ge \lambda_{1,f}(\Omega)\nu(\Omega) = \lambda_{1,f}(\Omega),
\]
being $f|u_{n}|^{2}=f|u_{n}-u_{0}|^{2}\rightharpoonup\nu$, and $\int_{\Omega}fu_{n}^{2}dx=1$ and $u_{n}\to 0$ strongly also in $L^{2}(\de\Omega)$. Due to $\lambda_{1,f,\gamma}<\lambda_{1,f}$, we have a contradiction.

Hence, the limit $u_{0}$ is non null. Let $v_n = u_n - u_0$; then $v_n \rightharpoonup 0$. By the Brézis-Lieb Lemma, we can decompose the norms:
\begin{align*}
\int_\Omega |\nabla u_n|^2 dx &= \int_\Omega |\nabla u_0|^2 dx + \int_\Omega |\nabla v_n|^2 dx + o(1), \\
1 = \int_\Omega f u_n^2 dx &= \int_\Omega f u_0^2 dx + \int_\Omega f v_n^2 dx + o(1).
\end{align*}
Then, by \eqref{eqqq}
\begin{multline*}
\lambda_{1,f,\gamma} \ge \int_{\Omega} |\nabla u_{0}|^{2} dx+ \mu +\gamma\int_{\de\Omega}u_{0}^{2}d\sigma
\ge \int_{\Omega} |\nabla u_{0}|^{2} dx+ \lambda_{1,f}\nu +\gamma\int_{\de\Omega}u_{0}^{2}d\sigma \\
\ge \lambda_{1,f,\gamma} (1-\nu) + \lambda_{1,f}\nu = \nu(\lambda_{1,f}-\lambda_{1,f,\gamma})+\lambda_{1,f,\gamma}
\end{multline*}
where $\nu=1-\int_{\Omega}f u_{0}^{2}dx<1$. Last inequality can hold only if $\nu=0$, that is $\int_{\Omega}f u_{0}^{2}dx=1$. This means that $u_{0}$ is a minimizer for $\lambda_{1,f,\gamma}$, having weighted $L^{2}$ norm equal to $1$ and
\[
\lambda_{1,f,\gamma}=\lim \left[ \int_{\Omega}|\nabla u_{n}|^{2} dx +\gamma\int_{\de\Omega} u_{n}^{2}d\sigma \right]\ge \int_{\Omega} |\nabla u_{0}|^{2}dx+\gamma \int_{\de\Omega}u_{0}^{2}d\sigma \ge \lambda_{1,f,\gamma},
\]
That is $u_{0}$ is a minimizer for $\lambda_{1,f,\gamma}$.

If in addition $f \in L^{\nicefrac N2}(\Omega)$, as showed in Remark \ref{2.3} then $\Sigma_{f}=\emptyset$. Hence (without assuming the smallness of $\lambda_{1,f,\gamma}$) it is not difficult to show that the convergence of the approximating sequence is strong for $\int_{\Omega}fu_{n}^{2}dx$ and this allow to pass to the limit and get that $u_{0}\not\equiv 0$ is a minimizer. Moreover, $u_{0}\not\equiv 0$ on $\de\Omega$, and then $\lambda_{1,f,\gamma}<\lambda_{1,f}$.

\textit{Case $\gamma<0$.} For a normalized minimizing sequence $u_{n}$, by arguing similarly as in the case $\gamma>0$, the concentration compactness gives $u_{0}\not\equiv 0$. Then,
\begin{align*}
\lambda_{1,f,\gamma}(\Omega) &=\lim_{n\to \infty}\int_{\Omega}|\nabla(u_{n}-u_{0})|^{2}dx +\int_{\Omega}|\nabla u_{0}|^2 d x+\gamma\int_{\partial \Omega}  u_{0}^2 d \sigma \\ &\ge \lim_{n\to \infty}\int_{\Omega}|\nabla(u_{n}-u_{0})|^{2}dx+\lambda_{1,f,\gamma}(\Omega)\int_{\Omega}u_{0}^{2}fdx
\end{align*}
Hence
\[
0\ge \lambda_{1,f,\gamma}(\Omega)\left(1-\int_{\Omega}u_{0}^{2}f dx \right)\ge  \lim_{n\to \infty}\int_{\Omega}|\nabla(u_{n}-u_{0})|^{2}dx,
\]
and all the above inequalities are equalities. This implies that $u_{n}\to u_{0}$ strongly in $H^{1}$, and $u_{0}$ is a minimum for $\lambda_{1,f,\gamma}(\Omega)$.
\medskip

\textit{\underline{Step 4:} proof of (i)(2)}. The limit \eqref{liminfty2} is given by the inequality $\lambda_{1,f,\gamma}(\Omega)\le \frac{ P(\Omega)}{\|f\|_{L^{1}}}\gamma$. Moreover, it is obvious, by definition, that $\lambda_{1,f,\gamma}$ is nondecreasing in $\gamma$. As regards \eqref{liminfty}, by \eqref{ineq1} we have only to show that
\[
L=\lim_{\gamma\to \infty} \lambda_{1,\gamma,f}(\Om) \ge \lambda_{1,f}(\Om).
\]
We proceed by contradiction. Assume that $L < \lambda_{1,f}(\Omega)$. This assumption implies that for $\gamma$ large enough, $\lambda_{1,f,\gamma} < \lambda_{1,f}$, which guarantees that a minimizer for $\lambda_{1,f,\gamma}$ exists.

For each $n \in \mathbb{N}$, we can find a $\gamma_n$ (with $\gamma_n \to \infty$) such that $\lambda_{1,f,\gamma_n} < L + 1/n$. By the definition of the infimum, for each $\gamma_n$, we can find a function $w_n \in H^1(\Omega)$ such that $\displaystyle\int_\Omega f(x) w_n^2 dx = 1$ and
   \[
   \displaystyle\frac{\displaystyle\int_\Omega |\nabla w_n|^2dx+\gamma_{n} \int_{\partial \Omega}w_n^{2}d\sigma}{\displaystyle\int_{\Omega}f(x)w_n^2dx} < L + \frac{1}{n}.
   \]
 Thus, the sequence $\{w_n\}$ is bounded in $H^1(\Omega)$, and, up to a subsequence, $w_n \rightharpoonup w_0$ weakly in $H^1(\Omega)$, $w_n \to w_0$ strongly in $L^2(\Omega)$ and in $L^2(\partial\Omega)$, respectively.
Furthermore, being $\gamma_{n}\to+\infty$, the weak limit $w_0$ belongs to $H^1_0(\Omega)$.

Now, let $v_n = w_n - w_0$; then $v_n \rightharpoonup 0$. By the Brézis-Lieb Lemma,
\begin{align*}
\int_\Omega |\nabla w_n|^2 dx &= \int_\Omega |\nabla w_0|^2 dx + \int_\Omega |\nabla v_n|^2 dx + o(1), \\
1 = \int_\Omega f w_n^2 dx &= \int_\Omega f w_0^2 dx + \int_\Omega f v_n^2 dx + o(1).
\end{align*}
The bound on the gradient of $w_{n}$ gives that
\[
\int_\Omega |\nabla w_0|^2 dx + \int_\Omega |\nabla v_n|^2 dx \le L + o(1).
\]
Since $w_0 \in H^1_0(\Omega)$:
\[
\displaystyle\int_\Omega |\nabla w_0|^2 dx \ge \lambda_{1,f} \int_\Omega f w_0^2 dx;
\]
 moreover, using \eqref{ineqmu} of the concentration-compactness Lemma,
 \[
 \displaystyle\int_\Omega |\nabla v_n|^2 dx \ge (\lambda_{1,f} - o(1)) \int_\Omega f v_n^2 dx.
 \]
Substituting,
\[
\lambda_{1,f} \left( \int_\Omega f w_0^2 dx + \int_\Omega f v_n^2 dx \right) \le L + o(1)
\]
and by the decomposition of the denominator
\[
\lambda_{1,f}(1 - o(1)) \le L + o(1).
\]
Taking the limit as $n \to \infty$, we obtain $\lambda_{1,f} \le L$.
This contradicts the initial assumption that $L < \lambda_{1,f}$, and the proof of this step is concluded.

%(iv). Consider $0 < \gamma_1 < \gamma_2$, and $\lambda_{1,f,\gamma_{2}}<\lambda_{1,f}$. We know that $\lambda(\gamma_1) \le \lambda(\gamma_2)$. To show the inequality is strict, let $u_2$ be the eigenfunction for $\gamma_2$. Its trace cannot be identically zero, otherwise it would be a Dirichlet eigenfunction. Thus $\int_{\partial\Omega} u_2^2 d\sigma > 0$. We have:
%\begin{multline*}
%\lambda(\gamma_2) = \frac{\ds\int_{\Omega}|\nabla u_2|^2dx+\gamma_2 \int_{\partial\Omega}u_2^2d\sigma}{\ds\int_\Omega f u_2^2dx} >\\> \frac{\ds\int_{\Omega}|\nabla u_2|^2dx+\gamma_1 \int_{\partial\Omega}u_2^2d\sigma}{\ds\int_{\Omega} f u_2^2dx} \ge \inf_{\psi} \frac{\ds\int_{\Omega}|\nabla \psi|^2dx+\gamma_1 \int_{\partial\Omega}\psi^2}{\ds\int_{\Omega} f \psi^2dx} = \lambda(\gamma_1).
%\end{multline*}
%This proves the strict monotonicity.
\medskip

\textit{\underline{Step 5:} proof of (iii)(a)}. Let $\lambda(\gamma) := \lambda_{1,f,\gamma}$. We prove that the map is continuous for $\gamma$ in any interval where the condition $\lambda(\gamma) < \lambda_{1,f}(\Omega)$ holds, which guarantees the existence of a minimizer.

 The right-hand continuity easily follows. For any $\gamma_0$, let $u_0$ be the corresponding normalized minimizer. For any $\gamma > \gamma_0$, using $u_0$ as a test function for $\lambda(\gamma)$ gives
\[
\lambda(\gamma) \le R_{\gamma}(u_0) = \lambda(\gamma_0) + (\gamma-\gamma_0)\int_{\partial\Omega}u_0^2 d\sigma.
\]
As $\gamma \to \gamma_0^+$, the right-hand side tends to $\lambda(\gamma_0)$. Since we already know the map is non-decreasing, we have $\lambda(\gamma_0) \le \lim_{\gamma\to\gamma_0^+} \lambda(\gamma) \le \lambda(\gamma_0)$, which proves right-hand continuity.

As regards the left-hand continuity, let $\{\gamma_n\}_{n\in\mathbb{N}}$ be any sequence such that $\gamma_n \to \gamma_0^{-}$. Let $w_n$ be the corresponding normalized minimizer, which exists by hypothesis. The sequence $\{\lambda_n\}$ is non-decreasing and bounded above by $\lambda(\gamma_0)$. Thus, it converges to a limit $L \le \lambda(\gamma_0)$. We aim to show that $L = \lambda(\gamma_0)$.

The sequence $\{w_n\}$ is bounded in $H^1(\Omega)$. Let $w_0 \in H^1(\Omega)$ its weak limit. The proof runs similarly as before.

First, let us show that $w_0\not\equiv 0$. We argue by contradiction, assuming $w_0 \equiv 0$. In this case, the mass decomposition $1 = \int_\Omega f w_n^2 dx = \int_\Omega f w_0^2 dx + \nu(\Omega) + o(1)$ implies that  $\nu(\Omega)=1$.
The energy limit is
\[
L = \lim_{n\to\infty} \lambda(\gamma_n) = \lim_{n\to\infty} \left( \int_\Omega |\nabla w_n|^2 dx + \gamma_n \int_{\partial\Omega} w_n^2 d\sigma \right).
\]
Since $w_n \rightharpoonup 0$, the trace converges strongly in $L^{2}$ to 0, and the boundary term vanishes in the limit. The energy decomposition becomes $L = \mu(\Omega)$.
Hence, using \eqref{ineqmu} we get
\[
L = \mu(\Omega) \ge \lambda_{1,f}(\Omega) \nu(\Omega) = \lambda_{1,f}(\Omega).
\]
We have the chain of inequalities $L \le \lambda(\gamma_0)$ and $L \ge \lambda_{1,f}(\Omega)$, which implies $\lambda(\gamma_0) \ge \lambda_{1,f}(\Omega)$. This contradicts the hypothesis that a minimizer exists at $\gamma_0$. Thus, the assumption $w_0 \equiv 0$ is false.

Second, we prove that the convergence is strong. Indeed, we first observe that
\[
L = \lim_{n\to\infty} \lambda(\gamma_n)= \left( \int_\Omega |\nabla w_0|^2 dx + \gamma_0 \int_{\partial\Omega} w_0^2 d\sigma \right) + \mu(\Omega).
\]
By definition  of $\lambda(\gamma_{0})$ and the concentration-comparison Lemma:
\[
L \ge \left( \lambda(\gamma_0) \int_\Omega f w_0^2 dx \right) + \lambda_{1,f} \nu(\Omega).
\]
Using the mass decomposition $\int_\Omega f w_0^2 dx = 1 - \nu(\Omega)$, and the fact that $L \le \lambda(\gamma_0)$:
\begin{align*}
\lambda(\gamma_0) &\ge \lambda(\gamma_0) (1 - \nu(\Omega)) + \lambda_{1,f} \nu(\Omega) \\
0 &\ge (\lambda_{1,f} - \lambda(\gamma_0))\nu(\Omega).
\end{align*}
Since $\lambda(\gamma_0) < \lambda_{1,f}$ by hypothesis, the term $(\lambda_{1,f} - \lambda(\gamma_0))$ is strictly positive. Since $\nu(\Omega) \ge 0$, this inequality can only hold if $\nu(\Omega)=0$. This implies that the convergence is strong: $w_n \to w_0$ in $H^1(\Omega)$.

Finally, we can pass to the limit in all terms, showing that
 $\displaystyle\int_\Omega f w_0^2 dx = \lim_{n\to\infty} \int_\Omega f w_n^2 dx = 1$, and
    \[
    \lim_{n\to\infty} \lambda (\gamma_{n}) = \lim_{n\to\infty} \left( \int_\Omega |\nabla w_n|^2 dx + \gamma_n \int_{\partial\Omega} w_n^2 d\sigma \right) = \int_\Omega |\nabla w_0|^2 dx + \gamma_0 \int_{\partial\Omega} w_0^2 d\sigma.
    \]
Hence
\[
 \frac{\displaystyle\int_\Omega |\nabla w_0|^2 dx + \gamma_0 \int_{\partial\Omega} w_0^2 d\sigma}{\displaystyle\int_\Omega f w_0^2 dx} = L.
\]
This gives $\lambda(\gamma_0) \le L$. Since we already knew $L \le \lambda(\gamma_0)$, we must conclude that $L = \lambda(\gamma_0)$. This proves left-hand continuity, and the map is finally continuous.

\medskip

\textit{\underline{Step 6}:} \textit{proof of (iii)(b)}. Let us still denote $\lambda(\gamma) := \lambda_{1,f,\gamma}$. The previous proof, in particular, gives also that the map $\gamma \mapsto u_\gamma$ is continuous from $\mathbb{R}$ to $H^1(\Omega)$.

Let $u_\gamma\ge0$ normalized first eigenfunction for $\lambda(\gamma)$. Using $u_{\gamma}$ as a test function for $\lambda(\gamma+h)$, we get
\[
\lambda(\gamma+h) \le \frac{\displaystyle\int_\Omega |\nabla u_\gamma|^2 dx + (\gamma+h)\int_{\partial\Omega} u_\gamma^2 d\sigma}{\displaystyle\int_\Omega f u_\gamma^2 dx}=\lambda(\gamma) + h\int_{\partial\Omega} u_\gamma^2 d\sigma,
\]
and then
\[
\limsup_{h\to 0^{+}}\frac{\lambda(\gamma+h)-\lambda(\gamma)}{h} \le \int_{\de\Omega}u_{\gamma}^{2}d\sigma.
\]
Similarly, we have
\begin{equation*}
\lambda(\gamma) \le \lambda(\gamma+h) - h\int_{\partial\Omega} u_{\gamma+h}^2 d\sigma;
\end{equation*}
rearranging and dividing by $h > 0$ we have
\[
\int_{\partial\Omega} u_{\gamma+h}^2 d\sigma \le \frac{\lambda(\gamma+h) - \lambda(\gamma)}{h}.
\]
Using the continuity of the map $\gamma \mapsto u_\gamma$, we have that $u_{\gamma+h} \to u_\gamma$ strongly in $H^1(\Omega)$. This implies strong convergence of the traces in $L^2(\partial\Omega)$, and therefore:
\[
\int_{\partial\Omega} u_\gamma^2 d\sigma = \lim_{h\to 0^+} \int_{\partial\Omega} u_{\gamma+h}^2 d\sigma \le \liminf_{h\to 0^+} \frac{\lambda(\gamma+h) - \lambda(\gamma)}{h}.
\]
 An identical argument holds for the left-hand derivative. This concludes the proof.
\end{proof}

\begin{remark}
We emphasize that the statement \textit{(iii), (b)} of the previous proposition implies the existence of a threshold $\bar{\gamma} \in (0, +\infty]$ such that the map is {strictly} increasing for $\gamma \le  \bar{\gamma}$ and constant for $\gamma \ge \bar{\gamma}$ (where $\lambda_{1,f,\gamma} = \lambda_{1,f}$). Moreover, as shown in the next example, if $f\not\in L^{N/2}$, $\bar\gamma$ may be finite.
\end{remark}

\begin{example}
Let $\Omega = B_1(0) \subset \mathbb{R}^N$ with $N \ge 3$, and let the weight function be the Hardy potential, $f(x) = |x|^{-2}$. For any $\gamma \in \mathbb{R}$, the first eigenvalue of the problem
\[
\lambda_{1,f,\gamma} = \inf_{\psi \in H^1(B_1)\setminus\{0\}} \dfrac{\ds\int_{B_1} |\nabla \psi|^2dx+\gamma \int_{\de B_1}\psi^{2}d\sigma}{\ds\int_{B_1}\frac{\psi^2}{|x|^2}dx}
\]
is given as follows:
\begin{enumerate}[(i)]
    \item If $\gamma < \bar\gamma:=\frac{N-2}{2}$, the infimum is attained, and the eigenvalue is
    \[
        \lambda_{1,f,\gamma} = \gamma(N-2-\gamma).
    \]
    The corresponding eigenfunction (unique up to a multiplicative constant) is
    \[
        u_\gamma(x) = |x|^{-\gamma}.
    \]
    \item If $\gamma \ge \bar\gamma$, the eigenvalue is equal to the Hardy constant,
    \[
        \lambda_{1,f,\gamma} = \left(\frac{N-2}{2}\right)^2,
    \]
    and the infimum is not attained by any function in $H^1(B_1)$.
\end{enumerate}
\end{example}

\begin{proof}
By simmetry of the domain, the eigenfunction $u_\gamma$ must be radial, $u_\gamma(x) = u(r)$ where $r=|x|$. The corresponding Euler-Lagrange equation is
\[
u''(r) + \frac{N-1}{r} u'(r) + \frac{\lambda}{r^2} u(r) = 0,
\]
subject to the Robin boundary condition $u'(1) + \gamma u(1) = 0$.
We look for solutions of the form $u(r) = r^\alpha$. Substituting this into the ODE, we obtain
\[
\alpha^2 + (N-2)\alpha + \lambda = 0.
\]
The two roots are $\alpha_{1,2} = \frac{-(N-2) \pm \sqrt{(N-2)^2 - 4\lambda}}{2}$, and the general solution is $u(r) = c_1 r^{\alpha_1} + c_2 r^{\alpha_2}$.

In order to get $u\in H^{1}$, we require the exponents to be greater than $\frac{2-N}{2}$. Since $\alpha_2 < \frac{2-N}{2}$, we must set $c_2=0$, and $u(r) = A r^{\alpha_1}$.

Using the Robin condition $u'(1) + \gamma u(1) = 0$, we obtain $\alpha_1 = -\gamma$, and then $ \gamma < \frac{N-2}{2}$.

This analysis confirms that for any $\gamma < \frac{N-2}{2}$ (including all $\gamma \le 0$), a minimizer exists.
We can now find the eigenvalue $\lambda$ by substituting $\alpha = \alpha_1 = -\gamma$, obtaining
\[
 \lambda = \gamma(N-2-\gamma).
\]
This proves (i).

If $\gamma \ge (N-2)/2$, no solution of the form $r^\alpha$ can satisfy both the $H^1(B_1)$ requirement and the boundary condition. Since the map $\gamma \mapsto \lambda_{1,f,\gamma}$ is non-decreasing and continuous, and $\lambda_{1,f,\gamma} \le \lambda_{1,f} = (\frac{N-2}{2})^2$, we must have:
\[
\lambda_{1,f,\gamma} = \left(\frac{N-2}{2}\right)^2, \text{ for all }\gamma \ge \frac{N-2}{2}.
\]
In this case, the infimum is not attained. Indeed, by contradiction, let assume there exists $u_{\gamma_1} \in H^1(B_1)$ such that $R_{\gamma_1}[u_{\gamma_1}] = (\frac{N-2}{2})^2$. Since $\lambda(\gamma)$ is strictly increasing for $\gamma < (N-2)/2$, $\gamma_1$ cannot be an interior point of the interval where $\lambda(\gamma)$ is constant. If $\gamma_1 > (N-2)/2$, we could choose $\gamma_2$ such that $(N-2)/2 \le \gamma_2 < \gamma_1$. Then
\[
R_{\gamma_2}[u_{\gamma_1}] = R_{\gamma_1}[u_{\gamma_1}] - (\gamma_1-\gamma_2)\int_{\partial\Omega} u_{\gamma_1}^2 d\sigma = \left(\frac{N-2}{2}\right)^2 - (\gamma_1-\gamma_2)\int_{\partial\Omega} u_{\gamma_1}^2 d\sigma.
\]
If the trace is not zero, then $R_{\gamma_2}[u_{\gamma_1}] < (\frac{N-2}{2})^2 = \lambda(\gamma_2)$, which is impossible. If the trace is zero, $u_{\gamma_1} \in H^1_0(B_1)$, which is known to not contain a minimizer for the Hardy quotient. This proves (ii).
\end{proof}

\section{Existence results for Robin problems with natural growth in the gradient}
We start observing that a smallness assumption on $\lambda f$ is needed in order to get existence.
\begin{prop}
Let $\Omega \subset \mathbb{R}^N$ be a bounded domain with a $C^2$ boundary.
Let $f \in M^{N/2}(\Omega)$ be a weight function such that $f > 0$ a.e. in $\Omega$.
Suppose there exists a  solution $u \in H^1(\Omega)$ to
\begin{equation}
\label{problemmainnec}
\begin{cases}
    \Delta u + |\nabla u|^2 + \lambda f = 0 & \text{in } \Omega \\
    \frac{\partial u}{\partial \nu} + \beta u = 0 & \text{on } \partial\Omega
\end{cases}
\end{equation}
for some constant $\lambda > 0$ and $\beta>0$. % and $\beta <\tilde\gamma$.
Then
\[
    \lambda \le \lambda_{1,f}(\Omega).
\]
\end{prop}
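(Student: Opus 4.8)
The plan is to exploit the classical substitution $v = e^{u}$, which (as observed in the introduction) turns the equation $\Delta u + |\nabla u|^2 + \lambda f = 0$ into the linear equation $\Delta v + \lambda f v = 0$ in $\Omega$, with $v > 0$ and $v \in H^1(\Omega)$ (since $u \in H^1$ and, by the expected exponential regularity, $e^{u} \in H^1$; alternatively one truncates $u$ and passes to the limit). The Robin condition $\partial_\nu u + \beta u = 0$ becomes $\partial_\nu v + \beta v \ln v = 0$ on $\partial\Omega$. The key sign observation is that $\beta > 0$ and the function $t \mapsto t \ln t$ has the same sign as $\ln t$, so the boundary term one picks up has a favorable sign when tested against $v$ itself: namely $\int_{\partial\Omega} v \,(\partial_\nu v)\, d\sigma = -\beta \int_{\partial\Omega} v^2 \ln v \, d\sigma$, and although $v^2\ln v$ is not sign-definite, the relevant weak formulation will produce a nonnegative contribution.

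Concretely, I would use $v$ (or a truncation $v_k = e^{\min(u,k)}$ to stay safely in $H^1$ and legitimate as a test function) as a test function in the weak formulation of the linearized problem. This yields
\[
\int_\Omega |\nabla v|^2 \, dx + \beta \int_{\partial\Omega} v^2 \ln v \, d\sigma = \lambda \int_\Omega f v^2 \, dx.
\]
Since $v = e^u > 0$, write $v^2 \ln v = v^2 u$; the boundary integrand is $\beta\, e^{2u} u$, and here is where $\beta > 0$ is used: I want to discard this term or bound it from below. On $\partial\Omega$, $e^{2u}u \ge u + \tfrac12$ is false in general, but $t e^{t}$-type estimates are not what is needed — instead the cleanest route is to test with $v^2/v = v$ but compare against the Robin condition directly. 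Actually the right test function is $1$ against a rescaling: testing the linearized equation with $v$ gives the displayed identity, and then I observe that $\int_{\partial\Omega} v^2\ln v\,d\sigma$ need not be controlled directly; rather, I should instead plug $v$ into the Rayleigh quotient defining $\lambda_{1,f}(\Omega)$ — but $v \notin H_0^1(\Omega)$. So the correct approach is to use the \emph{full-space} eigenvalue $\lambda_{1,f,\beta}$: test as above, note that $v^2\ln v \ge v^2\ln v$ trivially and instead bound $\beta\int_{\partial\Omega} v^2\ln v\,d\sigma$ using that $\ln v = u$ and $\int_{\partial\Omega} v^2 u\, d\sigma \ge -C$; this only gives $\lambda \le \lambda_{1,f,\beta} + o(1) \le \lambda_{1,f}(\Omega)$ after a scaling argument.

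More carefully: the honest argument replaces $v$ by $v/v(x_0)$-type normalization is not available, so one uses the scaling $v_\tau = v^\tau$ for $\tau \to 0^+$? That changes the equation. The robust path is: from the identity above, since by Proposition~\ref{prop:eigenvalue_properties}(i) we have $\lambda_{1,f,\beta}(\Omega) \le \lambda_{1,f}(\Omega)$, it suffices to show $\lambda \le \lambda_{1,f,\beta}(\Omega)$, i.e. that the Rayleigh quotient $R_{\beta,f}[v] = \big(\int_\Omega |\nabla v|^2 + \beta\int_{\partial\Omega} v^2\big)/\int_\Omega f v^2 \ge \lambda$ fails — wait, we need the reverse. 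From the identity, $\lambda \int_\Omega f v^2 = \int_\Omega|\nabla v|^2 + \beta\int_{\partial\Omega} v^2\ln v$; compared with $R_{\beta,f}[v]\int_\Omega f v^2 = \int_\Omega|\nabla v|^2 + \beta\int_{\partial\Omega}v^2$, the difference is $\beta\int_{\partial\Omega} v^2(1 - \ln v)\,d\sigma$, which is \emph{not} sign-definite. The resolution — and the main obstacle — is that one must first establish, via the a priori estimate, that the solution can be taken with $u \le 0$ on $\partial\Omega$ is false too; instead one adds a constant: $u$ solves the problem iff $u + c$ solves it with $\lambda f$ replaced by... no, the constant breaks the Robin condition.

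Therefore the genuine argument is: test the linearized equation with $v\varphi^2/v = $ an arbitrary $\varphi \in H^1(\Omega)$ via the substitution $w = v\varphi$, obtaining after Cauchy–Schwarz the inequality $\lambda \int_\Omega f \varphi^2 \le \int_\Omega |\nabla\varphi|^2 + \beta\int_{\partial\Omega}\varphi^2\ln v$ for all $\varphi$; choosing $\varphi$ supported in $\Omega$ (so $\varphi\in H_0^1$) kills the boundary term and gives exactly $\lambda\int_\Omega f\varphi^2 \le \int_\Omega|\nabla\varphi|^2$, hence $\lambda \le \lambda_{1,f}(\Omega)$ by taking the infimum over $\varphi\in H_0^1(\Omega)$. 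I expect the main technical obstacle to be justifying that $\phi_n := \varphi^2/(v + 1/n)$ (or $\varphi^2/v$) is an admissible test function in $H^1(\Omega)$ for $\varphi \in H_0^1(\Omega)$ — this requires $v$ bounded below on compact subsets (interior regularity / Harnack for the linear equation $\Delta v + \lambda f v = 0$ with $f \in M^{N/2}$) and care in passing to the limit $n\to\infty$, exactly as in the simplicity-of-eigenvalue argument (Picone-type) carried out earlier in the paper; the boundary term disappears because $\varphi$ has compact support in $\Omega$, so $\beta > 0$ plays no role in this particular necessity result and only the interior structure matters.
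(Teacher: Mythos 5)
Your final idea is the right one, and it is in fact algebraically the same as the paper's argument, but the route you take to reach it introduces regularity concerns that are genuine gaps in the write-up and are entirely avoidable.

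The paper does not perform the substitution $v=e^u$ at all in this proof. It tests the original equation for $u$ directly with $\psi=\varphi_0^2$, where $\varphi_0\in C_0^\infty(\Omega)$ is chosen so that $\int_\Omega|\nabla\varphi_0|^2<\lambda\int_\Omega f\varphi_0^2$ (possible since $\lambda>\lambda_{1,f}$ is assumed for contradiction). Since $\varphi_0$ has compact support, the Robin boundary term drops out, and the weak formulation gives
\[
\lambda\int_\Omega f\varphi_0^2\,dx=\int_\Omega\bigl(2\varphi_0\,\nabla u\cdot\nabla\varphi_0-|\nabla u|^2\varphi_0^2\bigr)\,dx\le\int_\Omega|\nabla\varphi_0|^2\,dx,
\]
the last step being the pointwise inequality $2\varphi_0\,\nabla u\cdot\nabla\varphi_0-|\nabla u|^2\varphi_0^2=-|\varphi_0\nabla u-\nabla\varphi_0|^2+|\nabla\varphi_0|^2\le|\nabla\varphi_0|^2$. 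This is a contradiction, and the proof is done. Observe that if you unwind your Picone computation you get exactly this: $\nabla(\varphi^2/v)=2\varphi\nabla\varphi/v-\varphi^2\nabla v/v^2$ and $\nabla v/v=\nabla u$, so $\int\nabla v\cdot\nabla(\varphi^2/v)=\int(2\varphi\,\nabla u\cdot\nabla\varphi-\varphi^2|\nabla u|^2)$, identical to the quantity above. The two approaches are the same computation in different coordinates.

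What your route costs you, and what you yourself flag at the end, is that you must justify $(1)$ that $v=e^u\in H^1(\Omega)$ for an arbitrary $H^1$ solution $u$ (the statement only assumes $u\in H^1$, not the exponential integrability established a posteriori by the existence theorem), and $(2)$ that $\varphi^2/v$ is an admissible test function, which requires a local lower bound on $v$ via some Harnack-type argument. Neither of these is needed in the paper's version: $\varphi_0^2\in C_c^\infty(\Omega)\subset H^1(\Omega)\cap L^\infty(\Omega)$ is admissible by definition of weak solution, and the only algebra involved is the completion of a square. Since the proposition gives you nothing about $e^u$, you should make the paper's direct choice of test function rather than pass through $v$; the extensive intermediate discussion attempting to control $\int_{\partial\Omega}v^2\ln v$ can be deleted wholesale, as the compactly supported test function makes the boundary irrelevant (a point you correctly identify at the very end, together with the correct observation that $\beta>0$ is not actually used).
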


\begin{proof}
The proof is by contradiction.
Assume that a solution $u \in H^1(\Omega)$ of \eqref{problemmainnec} exists for some $\lambda > \lambda_{1,f}(\Omega)$. Then, for every test function $\psi \in H^1(\Omega)\cap L^{\infty}(\Omega)$:
\begin{equation}
\label{wfu}
\int_\Omega \nabla u \cdot \nabla\psi \,dx + \beta \int_{\partial\Omega}  u \psi \,d\sigma = \int_\Omega (|\nabla u|^2 + \lambda f)\psi \,dx.
\end{equation}
The assumption $\lambda > \lambda_{1,f}(\Omega)$ implies that there exists a function $\varphi_0 \in C_{0}^\infty(\Omega)$
\begin{equation}
 \label{eq:key_inequality2}
\int_{\Omega} |\nabla \varphi_{0}|^{2} dx < \lambda \int_{\Omega} f \varphi_{0}^{2}dx.
\end{equation}
We now choose $\psi = \varphi_0^2$ as a test function in \eqref{wfu}. It follows that, also by Cauchy-Schwarz,
\begin{equation*}
\lambda \int_\Omega f \varphi_0^2 \,dx = \int_\Omega \left( 2\varphi_0 (\nabla u \cdot \nabla\varphi_0) - |\nabla u|^2 \varphi_0^2 \right) \,dx %+ \beta\int_{\de\Omega} u\varphi_{0}^{2}d\sigma  \\
\le \int_{\Omega} |\nabla \varphi_{0}|^{2}dx. %+ \beta\int_{\de\Omega} \varphi_{0}^{2}d\sigma.
\end{equation*}
This contradicts \eqref{eq:key_inequality2}.
\end{proof}
Now we finally consider the following class of Robin boundary value problems
\begin{equation}
\label{generalrobinpb}
\begin{cases}-\operatorname{div}(a(x, u,\nabla u))=H(x, u, \nabla u) & \text { in } \Omega \\ a(x,u,\nabla u)\cdot \nu+\beta u=0 & \text { on } \partial \Omega\end{cases}
\end{equation}
where $\Omega$ is a bounded Lipschitz, connected open subset of $\mathbb{R}^N, N \ge 3$, $\nu$ is the outer normal to $\Omega$ on $\de\Omega$ and $\beta > 0$. Moreover, the Carath\'eodory functions
$
a: \Omega \times \R\times \mathbb{R}^N \rightarrow \mathbb{R}^N
$
and
$
H: \Omega \times \R\times \mathbb{R}^N \rightarrow \mathbb{R}
$
satisfy
\begin{equation}\label{hp1a}
{a}(x,u, \xi) \cdot \xi \ge|\xi|^2,\qquad \forall \xi \in \R^{N},
\end{equation}
the monotonicity condition
\begin{equation}\label{hp2a}
\left({a}(x,u, z)-{a}\left(x, u, z^{\prime}\right)\right) \cdot\left(z-z^{\prime}\right)>0, \quad z \ne z^{\prime}
\end{equation}
and the growth conditions
\begin{equation}\label{hp3a}
|a(x,u, z)| \leq a_0|z|+a_1, \quad a_0, a_1>0,
\end{equation}
and
\begin{equation}\label{hpH}
|H(x,u, z)| \leq \sigma_0 |z|^2 + \lambda f(x), \quad \sigma_0 > 0,
\end{equation}
for almost every $x \in \mathbb{R}^N$, for every $u\in \R$, and $z, z^{\prime} \in \mathbb{R}^N$. Here $\lambda \ge 0$ is a constant and $f$ is a non-negative function in $M^{N/2}(\Omega)$, $f \not\equiv 0$.

We say that $u\in H^{1}$ is a weak solution to \eqref{generalrobinpb}
if for any $\varphi\in H^{1}(\Omega)\cap L^{\infty}(\Omega)$ it holds
\begin{equation*}\label{defw}
\int_{\Omega} a(x,u,\nabla u)\cdot \nabla \varphi \,dx +\beta\int_{\de \Omega} u\varphi\,d\sigma =
\int_{\Omega} H(x,u,\nabla u)\varphi \,dx.
\end{equation*}

\begin{theorem}
\label{thm:main_existence}
Suppose that $f\in M^{\nicefrac N2}(\Omega)$, and $(H_{f,1})-(H_{f,2})$ hold. Let us assume
\[
\lambda < \frac{\lambda_{1,f}(\Omega)}{\sigma_{0}},
\]
where $\lambda_{1,f}(\Omega)$ is defined in \eqref{lambda1f}, and $\sigma_{0}>0$ in \eqref{hpH}. Then there exists at least a solution $u\in H^{1}(\Omega)$ of problem \eqref{generalrobinpb}, under the conditions \eqref{hp1a}, \eqref{hp2a}, \eqref{hp3a}, \eqref{hpH} and $u$ is such that $e^{\sigma_{0}|u|}\in H^{1}(\Omega)$.
\end{theorem}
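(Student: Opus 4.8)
The plan is to argue by approximation: solve regularised problems with bounded right-hand side, establish a uniform exponential a priori estimate (this is the crux), and pass to the limit using the standard almost-everywhere convergence of gradients for natural-growth equations. For $n\in\mathbb N$ set $H_n(x,s,\xi):=H(x,s,\xi)/(1+\tfrac1n|H(x,s,\xi)|)$, a Carath\'eodory function with $|H_n(x,s,\xi)|\le\min\{n,\ \sigma_0|\xi|^2+\lambda f(x)\}$. Since $H_n$ is bounded and $\beta>0$ (so the associated operator on $H^1(\Omega)$ is coercive, by the Poincar\'e inequality with boundary term, and of Leray--Lions type by \eqref{hp2a}--\eqref{hp3a}), the problem \eqref{generalrobinpb} with $H$ replaced by $H_n$ admits a weak solution $u_n\in H^1(\Omega)$ by a standard fixed-point argument, the right-hand side being bounded; since $H_n(x,u_n,\nabla u_n)\in L^\infty(\Omega)$, a Stampacchia truncation argument (test with $(|u_n|-k)^+\sign u_n$, using \eqref{hp1a} and $\beta>0$) gives $u_n\in L^\infty(\Omega)$, which is needed only to legitimise the exponential test functions below.

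\emph{A priori estimate.} Fix $n$, write $u=u_n$, put $v:=e^{\sigma_0|u|}-1\in H^1(\Omega)\cap L^\infty(\Omega)$, and test with $\varphi:=(\sign u)(e^{2\sigma_0|u|}-1)=(\sign u)(v^2+2v)$, for which $\nabla\varphi=2\sigma_0 e^{2\sigma_0|u|}\nabla u$ a.e. Using \eqref{hp1a}, the nonnegativity of $\beta\int_{\partial\Omega}u\varphi\,d\sigma$ (one has $u\varphi=|u|(e^{2\sigma_0|u|}-1)\ge0$, and here $\beta>0$ is essential), the bound \eqref{hpH}, and $\sigma_0 e^{2\sigma_0|u|}|\nabla u|^2=\sigma_0^{-1}|\nabla v|^2$, one obtains after rearranging
\[
\frac1{\sigma_0}\int_\Omega|\nabla v|^2\,dx+\sigma_0\int_\Omega|\nabla u|^2\,dx+\beta\int_{\partial\Omega}|u|(v^2+2v)\,d\sigma\ \le\ \lambda\int_\Omega f(v^2+2v)\,dx .
\]
The decisive point is that $v$ and $|u|$ increase together: for any $M>0$, on $\{|u|\ge M/\sigma_0\}$ we have $|u|\,v^2\ge(M/\sigma_0)v^2$, whereas on $\{|u|<M/\sigma_0\}$ we have $0\le v<e^M-1$, so $\int_{\partial\Omega\cap\{|u|<M/\sigma_0\}}v^2\,d\sigma\le(e^M-1)^2|\partial\Omega|$; hence the boundary term controls $(M\beta/\sigma_0)\int_{\partial\Omega}v^2\,d\sigma$ up to a constant $C_M$ independent of $n$. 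Discarding $\sigma_0\int_\Omega|\nabla u|^2\ge0$ and using the definition \eqref{lambda1fgamma} of $\lambda_{1,f,M\beta}(\Omega)$ we get
\[
\frac{\lambda_{1,f,M\beta}(\Omega)}{\sigma_0}\int_\Omega f v^2\,dx\ \le\ \frac1{\sigma_0}\Bigl(\int_\Omega|\nabla v|^2\,dx+M\beta\int_{\partial\Omega}v^2\,d\sigma\Bigr)\ \le\ \lambda\int_\Omega f v^2\,dx+2\lambda\int_\Omega f v\,dx+C_M .
\]
By \eqref{liminfty} (valid under $(H_{f,1})$--$(H_{f,2})$), $\lambda_{1,f,M\beta}(\Omega)\to\lambda_{1,f}(\Omega)>\sigma_0\lambda$ as $M\to\infty$, so fix $M$ with $\lambda_{1,f,M\beta}(\Omega)>\sigma_0\lambda$; absorbing $2\lambda\int_\Omega fv$ by Young's inequality ($f\in L^1$) and then $\int_\Omega fv^2$ into the left-hand side yields $\int_\Omega f v_n^2\,dx\le C$ uniformly in $n$, hence also $\int_\Omega|\nabla v_n|^2\,dx\le C$. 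Proposition \ref{propequiv} then gives $\|v_n\|_{H^1(\Omega)}\le C$; since $|u_n|\le\sigma_0^{-1}v_n$ and $|\nabla u_n|\le\sigma_0^{-1}|\nabla v_n|$, also $\{u_n\}$ is bounded in $H^1(\Omega)$, and moreover $\int_\Omega e^{2\sigma_0|u_n|}|\nabla u_n|^2\,dx=\sigma_0^{-2}\int_\Omega|\nabla v_n|^2\,dx\le C$.

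\emph{Passage to the limit.} Along a subsequence, $u_n\rightharpoonup u$ in $H^1(\Omega)$, $u_n\to u$ in $L^2(\Omega)$, in $L^2(\partial\Omega)$ and a.e.; since $v_n\to e^{\sigma_0|u|}-1$ a.e. and $\{v_n\}$ is bounded in $H^1(\Omega)$, weak lower semicontinuity gives $e^{\sigma_0|u|}\in H^1(\Omega)$. The classical scheme for natural-growth equations with possibly unbounded solutions (\cite{BeBoMu,fm00,femu13}, here adapted to the Robin condition, whose additional boundary term vanishes in the limit by the strong $L^2(\partial\Omega)$-convergence of the traces) — test the approximate equations with $\psi(T_k u_n-T_k u)$, where $T_k$ is the truncation at level $k$ and $\psi(s)=se^{\mu s^2}$ with $\mu$ large, exploiting \eqref{hp1a}, \eqref{hp2a} and the exponential bound above to absorb the quadratic gradient term on $\{|u_n|>k\}$ — yields $\nabla T_k u_n\to\nabla T_k u$ strongly in $L^2(\Omega)$ for every $k$, hence $\nabla u_n\to\nabla u$ a.e. in $\Omega$. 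Then $a(x,u_n,\nabla u_n)\to a(x,u,\nabla u)$ a.e. and, by \eqref{hp3a}, is bounded in $L^2(\Omega)$, hence converges weakly there; moreover $\{|\nabla u_n|^2\}$ is equi-integrable (on a set $E$ of small measure, split the integral over $\{|u_n|\le k\}$, where $|\nabla T_k u_n|^2$ is equi-integrable by the strong convergence, and over $\{|u_n|>k\}$, where $\int_{\{|u_n|>k\}}|\nabla u_n|^2\,dx\le Ce^{-2\sigma_0 k}$), so from $|H_n(x,u_n,\nabla u_n)|\le\sigma_0|\nabla u_n|^2+\lambda f$, $f\in L^1$, the a.e. convergence and Vitali's theorem, $H_n(x,u_n,\nabla u_n)\to H(x,u,\nabla u)$ in $L^1(\Omega)$. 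Passing to the limit in the weak formulation tested against any $\varphi\in H^1(\Omega)\cap L^\infty(\Omega)$ shows that $u$ is a weak solution of \eqref{generalrobinpb}.

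\emph{Expected main obstacle.} The delicate step is the uniform a priori estimate, and in particular the fact that the smallness threshold may be taken to be the \emph{Dirichlet} eigenvalue $\lambda_{1,f}(\Omega)$ even though the Robin coefficient $\beta$ is fixed and possibly small. This rests on the monotone coupling between $e^{\sigma_0|u_n|}$ and $|u_n|$, which converts the natural boundary term $\beta\int_{\partial\Omega}|u_n|(e^{2\sigma_0|u_n|}-1)$ into $M\beta\int_{\partial\Omega}v_n^2$ with $M$ arbitrarily large (at the price of a harmless additive constant), after which \eqref{liminfty} supplies a \emph{finite} $M$ with $\lambda_{1,f,M\beta}(\Omega)>\sigma_0\lambda$; it is exactly here that $(H_{f,1})$--$(H_{f,2})$ enter. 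The a.e. convergence of gradients, though technically involved, follows a by-now-standard pattern.
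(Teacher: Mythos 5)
Your a priori estimate is correct and essentially the same as the paper's, merely repackaged: you test with the single signed exponential $(\sign u_n)(e^{2\sigma_0|u_n|}-1)$ instead of splitting into the positive and negative parts, and you handle the boundary term via the pointwise split between $\{|u_n|\ge M/\sigma_0\}$ and $\{|u_n|<M/\sigma_0\}$ rather than via the paper's observation that $g(t)=t^2(C_1-\beta\log t)+\beta\log t$ is bounded above for $t\ge 1$; both arguments exploit the same mechanism (the Robin term $\beta\int_{\partial\Omega}|u_n|\,e^{2\sigma_0|u_n|}$ dominates $\gamma\int_{\partial\Omega}(\cdots)^2$ for any $\gamma$ up to an additive constant, then \eqref{liminfty}, which is where $(H_{f,1})$--$(H_{f,2})$ enter, furnishes a $\gamma$ with $\lambda_{1,f,\gamma}>\sigma_0\lambda$). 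This part of the proposal is sound and tracks the paper.

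There is, however, a genuine gap in your passage to the limit. You propose to obtain $\nabla T_k u_n\to\nabla T_k u$ strongly in $L^2(\Omega)$ by testing the approximate equation with $\psi(T_ku_n-T_ku)$ alone, claiming that the estimate $\int_{\{|u_n|>k\}}|\nabla u_n|^2\,dx\le Ce^{-2\sigma_0 k}$ ``absorbs'' the quadratic gradient term on $\{|u_n|>k\}$. It does not. With the bare test function, the right-hand side produces the term $\sigma_0\int_{\{|u_n|>k\}}|\nabla u_n|^2|\psi|\,dx$, for which the only available bound is $\sigma_0\|\psi\|_{L^\infty}\,Ce^{-2\sigma_0 k}$: this does not tend to $0$ as $n\to\infty$ at fixed $k$, and moreover $\|\psi\|_{L^\infty}\sim 2k\,e^{4\mu k^2}$ blows up as $k\to\infty$, so no ordering of limits rescues the argument. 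As a consequence, you do not obtain $\int_\Omega\bigl[a(x,T_ku_n,\nabla T_ku_n)-a(x,T_ku_n,\nabla T_ku)\bigr]\cdot(\nabla T_ku_n-\nabla T_ku)\,dx\to 0$ at fixed $k$, which is precisely what \cite[Lemma 5]{bmp92} needs. The correct test function, used in the Ferone--Murat papers you cite and in the present paper, is $e^{\sigma_0|u_n|}\,\psi(T_ku_n-T_ku)$. The exponential prefactor is not cosmetic: differentiating it produces the term $-\sigma_0\,\sign(u_n)\,a(x,u_n,\nabla u_n)\cdot\nabla u_n\,e^{\sigma_0|u_n|}\psi$ on the right-hand side, and since $\psi(T_ku_n-T_ku)\,\sign(u_n)\ge 0$ on $\{|u_n|>k\}$, the ellipticity \eqref{hp1a} makes this cancel the dangerous contribution $\sigma_0|\nabla u_n|^2\,e^{\sigma_0|u_n|}\psi\,\sign(u_n)$ coming from $H_n$. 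What survives on $\{|u_n|>k\}$ is only $\lambda f\,e^{\sigma_0|u_n|}|\psi|$, which does vanish as $n\to\infty$ (Vitali, using the a priori bound on $e^{\sigma_0|u_n|}$ in $H^1$). Without the prefactor this cancellation is unavailable, and the argument as you wrote it does not close.
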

\begin{proof}

\textit{Step 1: a priori estimates}.
%Let $T_n:\R\to\R$ be defined as:
%\[
%T_n(s)=
%\left\{
%\begin{array}{ll}
%n & \hbox{ if } s\geq n\\
%s & \hbox{ if } -n<s<n\\
%-n & \hbox{ if } s\leq -n
%\end{array}
%\right.
%\]
%and $G_n(s)=s-T_n(s)$.

Let us consider the following approximation problem:
\begin{equation}
\label{rbvpapp}
\begin{cases}
    -\operatorname{div}(a(x, u_{n},\nabla u_{n}))=H_{n}(x, u_{n}, \nabla u_{n}) & \text { in } \Omega, \\
    a(x,u_{n},\nabla u_{n})\cdot \nu+\beta u_{n}=0 & \text { on } \partial \Omega,
\end{cases}
\end{equation}
where $H_n$ is a bounded approximation of $H$:
\[
H_{n}=\frac{H}{1+\nicefrac1n|H|},
\]
and it is such that $|H_{n}(x,u,\xi)| \le \min\{n, \sigma_0|\xi|^2 + \lambda f(x)\}$. Classical results can be applied in order to get the existence of a solution $u_{n}\in H^{1}(\Omega)$ of \eqref{rbvpapp} (see \cite{ll}). Moreover, from a classical argument by Stampacchia, $u_{n}\in L^{\infty}(\Omega)$.

Since $u_n$ does not have a constant sign, we split the estimates. We define $u_n^+ = \max\{u_n, 0\}$ and $u_n^- = \max\{-u_n, 0\}$.

\textit{Positive part estimate.} We choose the test function $\varphi_n = e^{2\sigma_0 u_n^+} - 1$. Note that $\varphi_n \ge 0$ and $\varphi_n$ vanishes on $\{u_n \le 0\}$, so the integrals are restricted to $\{u_n > 0\}$. We get:
\begin{multline*}
    \int_\Omega a(x,u_n,\nabla u_n) \cdot \nabla(e^{2\sigma_0 u_n^+} - 1) \,dx + \beta \int_{\partial\Omega} u_n (e^{2\sigma_0 u_n^+} - 1) \,d\sigma = \\ = \int_\Omega H_n(x,u_n,\nabla u_n) (e^{2\sigma_0 u_n^+} - 1) \,dx.
\end{multline*}
Using the ellipticity \eqref{hp1a} and the growth condition on $H_n$ on the set $\{u_n > 0\}$:
\begin{multline*}
    2\sigma_0 \int_{\{u_n > 0\}} |\nabla u_n|^2 e^{2\sigma_0 u_n} \,dx + \beta \int_{\partial\Omega} u_n^+ (e^{2\sigma_0 u_n^+} - 1) \,d\sigma \le \\ \le  \sigma_0 \int_{\{u_n > 0\}} |\nabla u_n|^2 (e^{2\sigma_0 u_n} - 1) \,dx + \lambda \int_\Omega f (e^{2\sigma_0 u_n^+} - 1) \,dx.
\end{multline*}
We perform the change of variables $v_n = e^{\sigma_0 u_n^+}$, obtaining
\begin{equation}
\label{mainineqcv}
    \frac{1}{\sigma_0} \int_\Omega |\nabla v_n|^2 dx + \sigma_0 \int_{\{u_n > 0\}} |\nabla u_n|^2 dx + \frac{\beta}{\sigma_0} \int_{\partial\Omega} (v_n^2-1) \log v_n \, d\sigma \le \lambda \int_\Omega f v_n^2 dx.
\end{equation}
We choose $\gamma > 0$ such that $\sigma_0 \lambda < \lambda_{1,f,\gamma}$. Hence by \eqref{mainineqcv} and the definition of $\lambda_{1,f,\gamma}$ we get
\begin{multline}
\label{mainineqcv2}
     \int_\Omega |\nabla v_n|^2 dx + \beta \int_{\partial\Omega} (v_n^2-1) \log v_n \, d\sigma \le    \\ \le \sigma_0 \lambda \int_\Omega f v_n^2 dx\le \frac{\sigma_0 \lambda}{\lambda_{1,f,\gamma}} \left[ \int_\Omega |\nabla v_n|^2dx+\gamma \int_{\partial \Omega}v_n^2d\sigma\right].
\end{multline}
From \eqref{mainineqcv2} we get
\begin{equation*}
%\label{stimaDv2}
\left(1-\frac{\sigma_0 \lambda}{\lambda_{1,f,\gamma}}\right) \int_{\Omega} |\nabla v_n|^{2} dx\le
\int_{\de\Omega} \left[\left( C_{1} - \beta \log v_{n}\right)v_{n}^{2}+\beta \log v_{n} \right] d\sigma
\end{equation*}
where $C_{1}=\frac{\sigma_0 \lambda \gamma }{\lambda_{1,f,\gamma}}$. Since $\beta > 0$, the integrand on the right-hand side, $g(t) = t^2(C_{1} - \beta\log t)+\beta \log t$, is bounded from above for $t \ge 1$ and the maximum is given by some value $C_{2}=C_{2}(\lambda,\sigma_{0},\gamma,\beta,f)$.
Hence
\begin{equation}
\label{stimaDv22}
\int_{\Omega} |\nabla v_{n}|^{2} dx \le \frac{C_2 P(\Omega)}{1-\nicefrac{\sigma_0 \lambda}{\lambda_{1,f,\gamma}}}.
\end{equation}
Similarly, from \eqref{mainineqcv2}, substituting $\int_\Omega |\nabla v_n|^2 dx \le \sigma_0 \lambda \int_\Omega f v_n^2 dx - \beta \int_{\partial\Omega} (v_n^2-1) \log v_n d\sigma$ into the eigenvalue inequality relation, we can derive a bound for the weighted $L^2$ norm:
\begin{equation}
\label{stimav2}
\int_{\Omega} f v_{n}^{2} dx \le \frac{1}{\lambda_{1,f,\gamma}-\sigma_0 \lambda} \int_{\de\Omega} \left[\left(\gamma -\beta \log v_{n}\right) v_{n}^{2}+\beta\log v_{n}\right]d\sigma \le \frac{C_{3}P(\Omega)}{\lambda_{1,f,\gamma}-\sigma_0 \lambda} ,
\end{equation}
where $C_3$ is the maximum of the function $h(t)=t^2(\gamma-\beta\log t)+\beta \log t$ for $t \ge 1$.

\textit{Negative part estimate.}
We proceed similarly for $u_n^-$. We choose the test function $\psi_n = 1 - e^{2\sigma_0 u_n^-}$. Note that $\psi_n \le 0$ and it vanishes on $\{u_n \ge 0\}$, so the support is $\{u_n < 0\}$. The gradient is $\nabla \psi_n = -2\sigma_0 e^{2\sigma_0 u_n^-} \nabla u_n^- = 2\sigma_0 e^{2\sigma_0 u_n^-} \nabla u_n$ on the set $\{u_n < 0\}$. Substituting into \eqref{rbvpapp}:
\begin{multline*}
    \int_{\{u_n < 0\}} a(x,u_n,\nabla u_n) \cdot (2\sigma_0 e^{2\sigma_0 u_n^-} \nabla u_n) \,dx + \beta \int_{\partial\Omega} u_n (1 - e^{2\sigma_0 u_n^-}) \,d\sigma = \\ = \int_{\{u_n < 0\}} H_n(x,u_n,\nabla u_n) (1 - e^{2\sigma_0 u_n^-}) \,dx.
\end{multline*}
Using the ellipticity \eqref{hp1a} on the left hand side:
\[
2\sigma_0 \int_{\{u_n < 0\}} |\nabla u_n|^2 e^{2\sigma_0 u_n^-} \,dx.
\]
For the boundary term, note that on $\{u_n < 0\}$, $u_n = -u_n^-$, so:
\[
\beta \int_{\partial\Omega} (-u_n^-) (1 - e^{2\sigma_0 u_n^-}) \,d\sigma = \beta \int_{\partial\Omega} u_n^- (e^{2\sigma_0 u_n^-} - 1) \,d\sigma.
\]
For the right hand side, since $1 - e^{2\sigma_0 u_n^-} \le 0$, we use the absolute value of the bound on $H_n$:
\begin{align*}
\int_{\{u_n < 0\}} H_n (1 - e^{2\sigma_0 u_n^-}) \,dx &\le \int_{\{u_n < 0\}} |H_n| (e^{2\sigma_0 u_n^-} - 1) \,dx \\
&\le \int_{\{u_n < 0\}} (\sigma_0 |\nabla u_n|^2 + \lambda f) (e^{2\sigma_0 u_n^-} - 1) \,dx.
\end{align*}
Combining these estimates:
\begin{multline*}
    2\sigma_0 \int_{\{u_n < 0\}} |\nabla u_n|^2 e^{2\sigma_0 u_n^-} \,dx + \beta \int_{\partial\Omega} u_n^- (e^{2\sigma_0 u_n^-} - 1) \,d\sigma \le \\
    \sigma_0 \int_{\{u_n < 0\}} |\nabla u_n|^2 (e^{2\sigma_0 u_n^-} - 1) \,dx + \lambda \int_\Omega f (e^{2\sigma_0 u_n^-} - 1) \,dx.
\end{multline*}
We perform the change of variables $w_n = e^{\sigma_0 u_n^-}$. This leads to an inequality formally identical to \eqref{mainineqcv} but for $w_n$:
\[
    \frac{1}{\sigma_0} \int_\Omega |\nabla w_n|^2 dx + \sigma_0 \int_{\{u_n < 0\}} |\nabla u_n|^2 dx + \frac{\beta}{\sigma_0} \int_{\partial\Omega} (w_n^2-1) \log w_n \, d\sigma \le \lambda \int_\Omega f w_n^2 dx.
\]
By the same spectral argument used for $v_n$ (choosing $\gamma$ large enough), we obtain analogous bounds for the gradient and the weighted mass:
\begin{equation}
\label{ctp}
\int_{\Omega} |\nabla w_{n}|^{2} dx \le C, \quad \int_{\Omega} f w_{n}^{2} dx \le C.
\end{equation}

\textit{Conclusion.}
Being $f\in M^{\nicefrac{N}{2}}(\Omega)$, by Proposition \ref{propequiv}, using the estimates \eqref{stimaDv22}, \eqref{stimav2} and \eqref{ctp}, we obtain
\begin{equation}\label{bound vn H1}
\|v_{n}\|_{H^{1}(\Omega)} \le C, \quad \|w_{n}\|_{H^{1}(\Omega)} \le C.
\end{equation}
Finally, the boundedness of the $H^1(\Omega)$ norm implies the boundedness of the trace norm, by using the trace inequality
\begin{equation*}\label{boundvn bordo}
    \|v_{n}\|_{L^2(\partial\Omega)} \le C_{tr} \|v_{n}\|_{H^1(\Omega)}.
\end{equation*}

By the above estimates, up to a subsequence, $u_n$ converges weakly in $H^1(\Om)$,
strongly in $L^2(\Om)$ and a.e. in $\Om$ to some $u\in H^1(\Om)$.

\textit{Step 2: Convergence of $u_{n}$} {In order to prove that $u$ is a weak solution of \eqref{generalrobinpb},
we need the strong convergence of the gradients of $u_n$ in $L^{2}(\Omega)$. For this purpose we follow the strategy in \cite{fm98,fm00,femu13},
which goes back to the technique developed in \cite{BeBoMu} and consisting in
proving the almost everywhere convergence of the gradient
of truncation at height $k$ of $u_n$ after having proved an estimate on the
$L^p$ norms of the gradients of $u_n$ on the sets where $u_n$ is greater than $k$.

Let $T_n:\R\to\R$ be defined as:
\[
T_n(s)=
\left\{
\begin{array}{ll}
n & \hbox{ if } s\geq n\\
s & \hbox{ if } -n<s<n\\
-n & \hbox{ if } s\leq -n
\end{array}
\right.
\]
and $G_n(s)=s-T_n(s)$.

Let $Z_n = e^{\sigma_0 |u_n|} \in H^1(\Omega)$. Note that $Z_n$ is controlled by $v_nw_n$, so it is uniformly bounded in $H^1(\Omega)$. Recalling that $|\nabla u_n| = \frac{1}{\sigma_0} e^{-\sigma_0 |u_n|} |\nabla Z_n|$, we get:
\begin{equation*}\label{code 1}
\int_{\{|u_n|>k\}}|\nabla u_n|^2=
\frac{1}{\sigma_0^2} \int_{\{|u_n|>k\}}e^{-2\sigma_0 |u_n|}|\nabla Z_n|^2\leq
\frac{e^{-2\sigma_0 k}}{\sigma_0^2}\int_{\{|u_n|>k\}}|\nabla Z_n|^2\leq
\frac{e^{-2\sigma_0 k}}{\sigma_0^2}\int_\Omega |\nabla Z_n|^2
\end{equation*}
and by the uniform bounds on $Z_n$ it follows:
\begin{equation*}\label{code 2}
\int_\Omega |\nabla G_k(u_n)|^2\leq
C e^{-2\sigma_0 k},
\end{equation*}
so that $\displaystyle\int_\Omega |\nabla G_k(u_n)|^2$ goes to $0$
as $k$ tends to $+\infty$ uniformly on $n$.

Let us show now that $\nabla T_k(u_n)$ converges a.e. to $\nabla T_k(u)$
and for this purpose it is enough to show the strong convergence
of $\nabla T_k(u_n)$ in $(L^2(\Omega))^N$.

By \cite[Lemma 5]{bmp92},
in order to get that $\|\nabla T_k(u_n)-\nabla T_k(u)\|_{L^2(\Omega)}$ goes to $0$,
it is enough to show that
\begin{equation}\label{conv da prov}
\int_\Omega \left[a(x,T_k(u_n),\nabla T_k(u_n))-a(x,T_k(u),\nabla T_k(u))\right]\cdot \left(\nabla T_k(u_n)-\nabla T_k(u)\right)\to 0.
\end{equation}
Let $\psi:\R\to\R$ be the increasing, $C^1$-function defined as
$\psi(s)=2s\,e^{\Gamma s^2}$, where $\Gamma=\frac{\sigma_{0}^2(1+a_0)^2}{4}$.
Remark that $\psi$ satisfies:
\begin{equation}\label{propr psi}
\psi(0)=0\,\qquad
\psi'-\sigma_{0}(1+a_0)|\psi|\geq 1 \,.
\end{equation}
Using $w_n=e^{\sigma_0 |u_n|}\psi\left(T_k(u_n)-T_k(u)\right)$ as test function in \eqref{rbvpapp},
we get:
\begin{eqnarray}\label{conv grad 1}
&& \int_\Omega a(x,u_n,\nabla u_n)\cdot\left[\nabla T_k(u_n)-\nabla T_k(u)\right]e^{\sigma_0 |u_n|}\psi'\left(T_k(u_n)-T_k(u)\right)\\
\nonumber &=&
 \int_\Omega \left[H_n(x,u_n,\nabla u_n) - \text{sgn}(u_n)\sigma_0 a(x,u_n,\nabla u_n)\cdot\nabla u_n\right]e^{\sigma_0 |u_n|}\psi \\
\nonumber &&-
 \beta\int_{\partial\Omega}u_n e^{\sigma_0 |u_n|}\psi \,
\end{eqnarray}
where for sake of brevity we write $\psi\left(T_k(u_n)-T_k(u)\right)$ as $\psi$.
Taking into account that $u_n\to u$ in $L^2(\partial\Omega)$ (by Trace inequality),
the boundary term goes to $0$.
Hence, after setting $\Omega_-=\left\{x\in\Omega:\,|u_n|\leq k\right\}$
and $\Omega_+=\left\{x\in\Omega:\,|u_n|>k\right\}$,
equation \eqref{conv grad 1} becomes:
\begin{eqnarray}\label{conv grad 2}
&& \int_{\Omega_-}
 \left[a(x,T_k(u_n),\nabla T_k(u_n))-a(x,T_k(u_n),\nabla T_k(u))\right]\\
\nonumber &&\quad \cdot
 \left[\nabla T_k(u_n)-\nabla T_k(u)\right]e^{\sigma_0 |u_n|}\psi'\left(T_k(u_n)-T_k(u)\right)\\
\nonumber &&+ \int_{\Omega_-}
 a(x,T_k(u_n),\nabla T_k(u))
 \cdot
 \left[\nabla T_k(u_n)-\nabla T_k(u)\right]e^{\sigma_0 |u_n|}\psi'\left(T_k(u_n)-T_k(u)\right)\\
\nonumber &&+ \int_{\Omega_+}
 a(x,u_n,\nabla u_n)(-\nabla T_k(u))\psi'\left(T_k(u_n)-T_k(u)\right) e^{\sigma_0 |u_n|}\\
\nonumber &=&
 \int_{\Omega_-} \left[
 H_n(x,u_n,\nabla T_k(u_n))-\text{sgn}(u_n)\sigma_0 a(x,T_k(u_n),\nabla T_k(u_n))\cdot\nabla T_k(u_n)\right]e^{\sigma_0 |u_n|}\psi\\
\nonumber &&+
 \int_{\Omega_+} \left[
 H_n(x,u_n,\nabla u_n)-\text{sgn}(u_n)\sigma_0 a(x,u_n,\nabla u_n)\cdot\nabla u_n\right]e^{\sigma_0 |u_n|}\psi\\
\nonumber &&+ o(1)\,.
\end{eqnarray}
Let us analyze the several integrals in \eqref{conv grad 2}. First, we observe that
\[
\lim_{n\to +\infty}\int_{\Omega_-}
 a(x,T_k(u_n),\nabla T_k(u))
 \cdot
 \left[\nabla T_k(u_n)-\nabla T_k(u)\right]e^{\sigma_0 |u_n|}\psi'\left(T_k(u_n)-T_k(u)\right)=0,
\]
being $\nabla T_{k}(u_{n})\rightharpoonup \nabla T_{k}(u)$ weakly in $L^{2}(\Omega)$ and 
\[
\chi_{\{|u_{n}|\le k\}}e^{\sigma_0 T_{k}(|u_n|)}\psi'\left(T_k(u_n)-T_k(u)\right)
\]
strongly convergent in $L^{2}(\Omega)$. Second, it holds that
\[
\lim_{n}\int_{\Omega_+}
 a(x,u_n,\nabla u_n)(-\nabla T_k(u))\psi'\left(T_k(u_n)-T_k(u)\right) e^{\sigma_0 |u_n|}=0.
\]
Indeed, $\psi'\left(T_k(u_n)-T_k(u)\right)$ is
bounded in $L^\infty(\Omega)$; moreover the inequalities \eqref{hp3a} and \eqref{bound vn H1} gives that $a(x,u_n,\nabla u_n)e^{\sigma_0|u_n|}$ is bounded in $[L^2(\Omega)]^N$, while $u_{n}$ strongly converges in $L^{2}(\Omega)$.
%, the a.e. and the strong convergence in $L^2(\Omega)$ of $u_n$, the quantity $\chi_{\{|u_n|>k\}}\n T_k(u)$ strongly converges to $0$ in $[L^2(\Omega)]^N$.

On the other hand, also the last term in the right-hand side of \eqref{conv grad 2} goes to $0$ as $n$ tends to $+\infty$. Indeed, recalling that $\psi\left(T_k(u_n)-T_k(u)\right)\text{sgn}(u_n)\geq 0$ in $\Omega_+$, and using \eqref{hpH} and \eqref{hp1a}, we have:
\begin{eqnarray*}
&&\int_{\Omega_+} \left[
 H_n(x,u_n,\nabla u_n)-\text{sgn}(u_n)\sigma_0 a(x,u_n,\nabla u_n)\cdot\nabla u_n\right]
 e^{\sigma_0 |u_n|}\psi\left(T_k(u_n)-T_k(u)\right)\\
&=&\int_{\Omega_+} \left[
 H_n(x,u_n,\nabla u_n)\text{sgn}(u_n)-\sigma_0 a(x,u_n,\nabla u_n)\cdot\nabla u_n\right]
 e^{\sigma_0 |u_n|}\psi\left(T_k(u_n)-T_k(u)\right)\text{sgn}(u_n)\\
&\leq&\int_{\Omega_+}
\left[
 \sigma_0|\nabla u_n|^2-\sigma_0 a(x,u_n,\nabla u_n)\cdot\nabla u_n\right]
 e^{\sigma_0 |u_n|}\psi\left(T_k(u_n)-T_k(u)\right)\text{sgn}(u_n)\\
&&\qquad\qquad+\int_{\Omega_+}
\lambda f e^{\sigma_0 |u_n|}\psi\left(T_k(u_n)-T_k(u)\right)\text{sgn}(u_n)\\
&\leq&\int_{\Omega_+}
\lambda f e^{\sigma_0 |u_n|}\psi\left(T_k(u_n)-T_k(u)\right)\text{sgn}(u_n)
\end{eqnarray*}
and we conclude combining the fact that $|\psi(T_k(u_n)-T_k(u))| \to 0$ a.e. and the estimates \eqref{bound vn H1}.
%using, as above, the estimates on $v_n$ and $w_n$ obtained before.

Hence, by \eqref{conv grad 2}, using assumptions \eqref{hp3a} and \eqref{hpH},
on $\Omega_-$ we infer:
\begin{equation}\label{conv grad 3}
\begin{array}{lll}
&& \ds\int_{\Omega_-}
 \left[a(x,T_k(u_n),\nabla T_k(u_n))-a(x,T_k(u_n),\nabla T_k(u))\right]\\[.2cm]
 &&\ds\qquad\qquad\qquad \cdot
 \left[\nabla T_k(u_n)-\nabla T_k(u)\right]e^{\sigma_0 |u_n|}\psi'\left(T_k(u_n)-T_k(u)\right)\\[.2cm]
 &\leq&\ds
 \int_{\Omega_-}
 \left[\sigma_0|\nabla T_k(u_n)|^2+\lambda f + \sigma_0(a_0|\nabla T_k(u_n)|+a_1)|\nabla T_k(u_n)|\right] e^{\sigma_0 |u_n|}|\psi|+o(1)\\[.3cm]
 &\leq&\ds
 \int_{\Omega_-}
 \sigma_{0}(1+a_0)|\nabla T_k(u_n)|^2 e^{\sigma_0 |u_n|}|\psi| + \int_{\Omega_-}(\lambda f + \sigma_0 a_1|\nabla T_k(u_n)|)e^{\sigma_0 |u_n|}|\psi|\\[.2cm]
 &&+o(1)\,.
\end{array}
\end{equation}
Reasoning as before, since $|\psi(T_k(u_n)-T_k(u))| \to 0$ a.e., the last integral of the last inequality goes to $0$ as $n\to +\infty$.
%As regards the quadratic term,
By assumption \eqref{hp1a} and \eqref{conv grad 3} we have:
\begin{equation}\label{conv grad 4}
\begin{array}{lll}
&&\ds \int_{\Om_-}
 \left[a(x,T_k(u_n),\n T_k(u_n))-a(x,T_k(u_n),\n T_k(u))\right]\\[.2cm]
 &&\quad\qquad \cdot
 \left[\n T_k(u_n)-\n T_k(u)\right]e^{\sigma_0|u_n|}\psi'\left(T_k(u_n)-T_k(u)\right)\\[.3cm]
 &\leq&
 \ds\int_{\Om_-}
 a(x,T_k(u_n),\n T_k(u_n))\cdot\n T_k(u_n) e^{\sigma_0|u_n|}\sigma_0(1+a_0)|\psi|+o(1)\\[.3cm]
 &=& \ds \int_{\Om_-}
\left[a(x,T_k(u_n),\n T_k(u_n))-a(x,T_k(u_n),\n T_k(u))\right]\\[.3cm]
 &&\quad \qquad\ds \cdot
 \left[\n T_k(u_n)-\n T_k(u)\right]e^{\sigma_0|u_n|}\sigma_0(1+a_0)|\psi|\\[.3cm]
 &&+\ds
 \int_{\Om_-}
 a(x,T_k(u_n),\n T_k(u))\cdot\n \left(T_k(u_n)-T_k(u)\right) e^{\sigma_0|u_n|}\sigma_0(1+a_0)|\psi|\\[.3cm]
 &&+\ds
\int_{\Om_-}
 a(x,T_k(u_n),\n T_k(u_n))\cdot\n T_k(u_{n})\sigma_0(1+a_0)|\psi|\\[.3cm]
 &&+ o(1)\,.
\end{array}
\end{equation}
%\begin{eqnarray}\label{conv grad 4}
%&& \int_{\Om_-}
% \left[a(x,T_k(u_n),\n T_k(u_n))-a(x,T_k(u_n),\n T_k(u))\right]\\
%\nonumber &&\quad \cdot
% \left[\n T_k(u_n)-\n T_k(u)\right]e^{\sigma_0|u_n|}\psi'\left(T_k(u_n)-T_k(u)\right)\\
%\nonumber &\leq&
% \int_{\Om_-}
% a(x,T_k(u_n),\n T_k(u_n))\cdot\n T_k(u_n) e^{\sigma_0|u_n|}\sigma_0(1+a_0)|\psi|+o(1)\\
%\nonumber &=& \int_{\Om_-}
%\left[a(x,T_k(u_n),\n T_k(u_n))-a(x,T_k(u_n),\n T_k(u))\right]\\
%\nonumber &&\quad \cdot
% \left[\n T_k(u_n)-\n T_k(u)\right]e^{\sigma_0|u_n|}\sigma_0(1+a_0)|\psi|\\
%\nonumber &&+
% \int_{\Om_-}
% a(x,T_k(u_n),\n T_k(u))\cdot\n \left(T_k(u_n)-T_k(u)\right) e^{\sigma_0|u_n|}\sigma_0(1+a_0)|\psi|\\
%\nonumber &&+
%\int_{\Om_-}
% a(x,T_k(u_n),\n T_k(u_n))\cdot\n T_k(u_{n})\sigma_0(1+a_0)|\psi|\\
%\nonumber &&+ o(1)\,.
%\end{eqnarray}
%Arguing again as in the proof of Theorem 3.1 in \cite{femu13},
The second and third terms in the right-hand side in \eqref{conv grad 4} go to $0$
as $n$ tends to $+\infty$ and hence, using \eqref{propr psi} and \eqref{hp2a},
we get:
%Hence, rearranging and using assumption \eqref{hp2a}:
\begin{eqnarray*}\label{conv grad 5}
0&<&\int_\Om \left[a(x,T_k(u_n),\n T_k(u_n))-a(x,T_k(u_{n}),\n T_k(u))\right]\cdot \left(\n T_k(u_n)-\n T_k(u)\right)\\
\nonumber &\leq&
\int_{\Om_-}
 \left[a(x,T_k(u_n),\n T_k(u_n))-a(x,T_k(u_n),\n T_k(u))\right]\\
\nonumber &&\quad \qquad\cdot
 \left[\n T_k(u_n)-\n T_k(u)\right]e^{\sigma_0|u_n|}
 \left[\psi'-\sigma_0(1+a_0)|\psi|\right]\\
\nonumber &<&o(1)\,
\end{eqnarray*}
This proves \eqref{conv da prov}.

In order to conclude the proof, it is enough to note that
\[
\nabla u_n-\nabla u=\nabla T_k(u_n)-\nabla T_k(u)+\nabla G_k(u_n)-\nabla G_k (u)
\]
and to use the strong convergence in $L^2(\Omega)$ of $\nabla T_k(u_n)$ and
\[
\lim_{n\to +\infty}\displaystyle\int_\Omega |\nabla G_k(u_n)|^2dx=0.
\]}
Hence, we get the strong convergence in $L^{2}$ of $\nabla u_{n}$, that finally allows to pass to the limit in the approximating problems \eqref{rbvpapp}, obtaining that $u$ is a solution to \eqref{generalrobinpb}.
\end{proof}

We conclude the paper by listing some radial example.
\begin{example}
We consider the problem with $\lambda,\beta>0$, in the unit ball $B_1(0) \subset \mathbb{R}^N$:
\begin{equation*} \label{eq:prob_u}
\begin{cases}
    \Delta u + |\nabla u|^2 + \lambda = 0 & \text{in } B_1, \\
    \frac{\partial u}{\partial \nu} + \beta u = 0 & \text{on } \partial B_1,
\end{cases}
\end{equation*}
and consider solutions $u\in H^{1}(\Omega)$ such that $e^{u}\in H^{1}(\Omega)$.

Using the transformation $v = e^u$, the problem is equivalent to the following system for the positive function $v$:
\begin{equation} \label{eq:prob_v}
\begin{cases}
    \Delta v + \lambda v = 0 & \text{in } B_1, \\
    \frac{\partial v}{\partial \nu} + \beta v \log v = 0 & \text{on } \partial B_1.
\end{cases}
\end{equation}
We look for radial solutions $v(r)$ with $r=|x|$. The equation in (\ref{eq:prob_v}) becomes:
\[
v'' + \frac{N-1}{r}v' + \lambda v = 0, \quad r \in (0,1),
\]
and the boundary condition at $r=1$ reads:
\begin{equation} \label{eq:bc_radial}
v'(1) + \beta v(1) \log(v(1)) = 0.
\end{equation}

Let $\mu = \sqrt{\lambda}$. We impose $\lambda<(j_{\frac{N-2}{2},1})^{2}$, where $j_{\frac{N-2}{2},1}$ is the first positive zero of $J_{\frac{N-2}{2}}(x)$.

 The solution is given by:
\[
v(r) = A \, r^{-\frac{N-2}{2}} J_{\frac{N-2}{2}}(\mu r),\quad r\in (0,1)
\]
where $J_\nu$ denotes the Bessel function of the first kind of order $\nu$, and $A > 0$ is a constant to be determined. Let $\alpha = \frac{N-2}{2}$. We define $\Phi(r) = r^{-\alpha} J_{\alpha}(\mu r)$.
The boundary condition (\ref{eq:bc_radial}) implies:
\[
A \Phi'(1) + \beta A \Phi(1) \log(A \Phi(1)) = 0.
\]
Being $\beta > 0$ and $\Phi(1) \neq 0$, we solve for $A$:
\[
\log A = - \frac{\Phi'(1)}{\beta \Phi(1)} - \log \Phi(1).
\]
Using the recurrence relation $\frac{d}{dz}(z^{-\alpha}J_\alpha(z)) = -z^{-\alpha}J_{\alpha+1}(z)$, we have:
\[
\Phi'(1) = -\mu J_{\alpha+1}(\mu), \quad \Phi(1) = J_{\alpha}(\mu).
\]
Substituting back into $u(r) = \log v(r) = \log A + \log \Phi(r)$, we obtain the explicit solution:
\[
u(r) = \frac{\mu}{\beta} \frac{J_{\frac{N}{2}}(\mu)}{J_{\frac{N-2}{2}}(\mu)} + \log \left( \frac{r^{-\frac{N-2}{2}} J_{\frac{N-2}{2}}(\mu r)}{J_{\frac{N-2}{2}}(\mu)} \right),\quad r\in(0,1).
\]
%
%\textit{Case 2: $\lambda < 0$}.
%Let $\mu = \sqrt{-\lambda}$. The equation becomes $\Delta v - \mu^2 v = 0$. The solution regular at the origin is expressed via the modified Bessel function of the first kind, $I_\nu$:
%\[
%v(r) = A \, r^{-\frac{N-2}{2}} I_{\frac{N-2}{2}}(\mu r).
%\]
%Set $\alpha = \frac{N-2}{2}$ and $\Psi(r) = r^{-\alpha} I_{\alpha}(\mu r)$.
%The boundary condition yields $\log A = - \frac{\Psi'(1)}{\beta \Psi(1)} - \log \Psi(1)$.
%Using the property $\frac{d}{dz}(z^{-\alpha}I_\alpha(z)) = z^{-\alpha}I_{\alpha+1}(z)$, we find:
%\[
%\Psi'(1) = \mu I_{\alpha+1}(\mu), \quad \Psi(1) = I_{\alpha}(\mu).
%\]
%Thus, the explicit solution for $u(r)$ is:
%\[
%u(r) = -\frac{\mu}{\beta} \frac{I_{\frac{N}{2}}(\mu)}{I_{\frac{N-2}{2}}(\mu)} + \log \left( \frac{r^{-\frac{N-2}{2}} I_{\frac{N-2}{2}}(\mu r)}{I_{\frac{N-2}{2}}(\mu)} \right).
%\]
\end{example}

\begin{example} We analyze the problem with a singular potential term, given by the PDE:
\begin{equation*}
\begin{cases}
    \Delta u + |\nabla u|^2 + \frac{\lambda}{|x|^2} = 0 & \text{in } B_1\\
    \frac{\partial u}{\partial \nu} + \beta u = 0 & \text{on }\partial B_1
    \end{cases}
\end{equation*}
with $\beta,\lambda>0$. By changing the variable and looking for radially symmetric solutions $v(x)=R(r)$, we obtain the ODE
\begin{equation*}
    R''(r) + \frac{N-1}{r} R'(r) + \frac{\lambda}{r^2} R(r) = 0
\end{equation*}
We look for a solution of the form $R(r)=Ar^\alpha$. This leads to
\begin{equation*}
    \alpha^2 + (N-2)\alpha + \lambda = 0
\end{equation*}
Let
\begin{equation*}
    R(r) = A r^{\alpha_1}
\end{equation*}
Here, $\alpha_1$ is given by
\begin{equation*}
    \alpha_1 = \frac{-(N-2) + \sqrt{(N-2)^2 - 4\lambda}}{2}
\end{equation*}
This form is considered for the range of $\lambda$ where $\alpha_1$ is real, i.e., $\lambda \le \frac{(N-2)^2}{4}$.

The single unknown constant $A$ is determined by the boundary condition at $r=1$:
\[
\frac{R'(1)}{R(1)} + \beta \log(R(1)) = 0
\]
which gives $\alpha_1 + \beta\log(A) = 0$.
Being $\beta > 0$, the (positive) solution $u$ is
\begin{equation*}
    u(x) = \alpha_1 \left( \log(|x|) - \frac{1}{\beta} \right)
\end{equation*}
\end{example}

\section*{Acknowledgement}
This work has been partially supported by PRIN PNRR 2022 ``Linear and Nonlinear PDEs: New directions and Applications'', by GNAMPA of INdAM, and by the ``Geometric-Analytic Methods for PDEs and Applications" project - funded by European Union - Next Generation EU  within the PRIN 2022 program (D.D. 104 - 02/02/2022 Ministero dell'Universit\`{a} e della Ricerca). This manuscript reflects only the authors' views and opinions and the Ministry cannot be considered responsible for them.

%
%\addcontentsline{toc}{chapter}{Bibliografia}

\end{document}